
\documentclass[a4paper, twoside]{amsart}

\usepackage[english]{babel}
\usepackage[utf8]{inputenc}
\usepackage{amsmath}
\usepackage{amsthm}
\usepackage{amssymb}
\usepackage{graphicx}
\usepackage{enumerate}
\usepackage{tikz-cd}
\usepackage{fullpage}
\usepackage{bbm}
\usepackage{hyperref}
\usepackage{fancyhdr}
\usepackage[top=1.5in,bottom=1in,right=1in,left=1in,headheight=12pt]{geometry}
\usepackage{stmaryrd}
\usepackage{color,soul} 

\usepackage[backend=biber,style=alphabetic, maxnames=100]{biblatex}
\addbibresource{biblio.bib}
\usepackage{csquotes}

\pagestyle{fancy}
\fancyhf{}
\fancyhead[LE,RO]{\thepage}
\fancyhead[CE]{MISJA F.A. STEINMETZ}
\fancyhead[CO]{EXPLICIT SERRE WEIGHTS FOR TWO-DIMENSIONAL REPRESENTATIONS}

\setlength{\headsep}{10pt}

\usepackage{mathtools}
\DeclarePairedDelimiter{\ceil}{\lceil}{\rceil}

\linespread{1.1}

\renewcommand{\O}{\mathcal{O}}

\newcommand{\QQ}{\mathbf{Q}}
\newcommand{\QQp}{\mathbf{Q}_p}
\newcommand{\ZZp}{\mathbf{Z}_p}
\newcommand{\QQpb}{\overline{\mathbf{Q}}_p}
\newcommand{\ZZ}{\mathbf{Z}}
\newcommand{\CC}{\mathbf{C}}
\newcommand{\RR}{\mathbf{R}}

\newcommand{\FFp}{\mathbf{F}_p}
\newcommand{\FFpb}{\overline{\mathbf{F}}_p}
\newcommand{\Kinf}{K_\infty}
\newcommand{\Minf}{M_\infty}
\newcommand{\Fil}{\mathrm{Fil}}
\newcommand{\Gal}{\mathrm{Gal}}
\newcommand{\Frob}{\mathrm{Frob}}
\newcommand{\Hom}{\mathrm{Hom}}
\newcommand{\gr}{\mathrm{gr}}
\newcommand{\GL}{\mathrm{GL}}

\newcommand\restr[2]{{
  \left.\kern-\nulldelimiterspace 
  #1 
  \vphantom{|}
  \right|_{#2} 
  }}
\newcommand{\ovr}[1]{\overline{#1}}
\newcommand{\undr}[1]{\underline{#1}}

\theoremstyle{definition}
\newtheorem{defn}{Definition}[section]

\theoremstyle{plain}
\newtheorem{thm}[defn]{Theorem}
\newtheorem{lem}[defn]{Lemma}
\newtheorem{prop}[defn]{Proposition}
\newtheorem{cor}[defn]{Corollary}

\newtheorem{thmx}{Theorem}

\theoremstyle{remark}
\newtheorem{rem}[defn]{Remark}

\begin{document}

\title{Explicit Serre weights for two-dimensional Galois representations over a ramified base}

\author{Misja F.A. Steinmetz}
\address{Mathematical Institute, Leiden University, Niels Bohrweg 1, 2333 CA, Leiden, the Netherlands}
\email{m.f.a.steinmetz@math.leidenuniv.nl}


\date{March 2022}

\dedicatory{In memory of Bas Edixhoven}


\begin{abstract}
Given a totally real number field $F$ and a mod $p$ Galois representation $\rho\colon G_F\to \mathrm{GL}_2(\bar{\mathbf{F}}_p)$, we propose an explicit definition of the set of Serre weights $W(\rho)$ attached to $\rho$. We prove that our explicit definition is equivalent to previous definitions available in the literature. As a consequence we obtain an explicit Serre's modularity conjecture for Hilbert modular forms over totally real number fields. Our work generalises previous work of Demb\'{e}l\'{e}--Diamond--Roberts and Calegari--Emerton--Gee--Mavrides which together give explicit and equivalent sets of weights when $p$ is unramified in $F$.
\end{abstract}

\maketitle

\section{Introduction}
Let $F$ be a totally real field, $p$ a prime and $\rho\colon G_F \to \GL_2(\FFpb)$ a continuous representation of the absolute Galois group of $F$. The weight part of Serre's modularity conjecture aims to predict the set of weights $W(\rho)$ of Hilbert modular forms such that the reduction of their associated Galois representation leads to $\rho$. A conjectural set of weights was first formulated in \cite{bdj10} under simplifying assumptions and in \cite{blgg13} in general. In \cite{gls15} it was proved that these conjectures give the correct sets of weights. All descriptions of $W(\rho)$ in these papers make use of subtle questions in integral $p$-adic Hodge theory, making it hard to do explicit computations in many cases. In this paper we give an alternative explicit definition of $W(\rho)$ only using local class field theory. We prove that our explicit definition of $W(\rho)$ is equivalent to earlier definitions.

Since the set of weights $W(\rho)$ decomposes as the tensor product of the weights associated to the local representations, it suffices only to consider the local representation $\rho|_{G_{F_\mathfrak{p}}}$ for a fixed prime $\mathfrak{p}\mid p.$ If $f_\mathfrak{p}$ denotes the residue field of $F_\mathfrak{p}$, then the set of weights $W(\rho|_{G_{F_\mathfrak{p}}})$ consists of Serre weights $V$ of $\GL_2(f_\mathfrak{p})$ (see Defn. \ref{defn:Serre-wt}). When $\rho|_{G_{F_\mathfrak{p}}}$ is irreducible, the set of associated Serre weights $W(\rho|_{G_{F_\mathfrak{p}}})$ is completely explicit already. Let us therefore consider the case when $\rho|_{G_{F_\mathfrak{p}}}$ is reducible. Then $\rho|_{G_{F_\mathfrak{p}}}$ is the extension of $\chi_2$ by $\chi_1$ for characters $\chi_1,\chi_2\colon G_{F_\mathfrak{p}} \to \FFpb^\times$. Therefore, $\rho$ defines a cocycle $c_\rho \in H^1(G_{F_\mathfrak{p}}, \FFpb(\chi_1\chi_2^{-1}))$. For a given Serre weight $V$, we define a \textit{distinguished subspace} $L_V\subseteq H^1(G_{F_\mathfrak{p}}, \FFpb(\chi_1\chi_2^{-1}))$ using $p$-adic Hodge theory (see Defn. \ref{defn:dist-subspc}). Traditionally, the set of Serre weights associated to $\rho|_{G_{F_\mathfrak{p}}}$ is defined as: $V\in W(\rho|_{G_{F_\mathfrak{p}}})$ if and only if $c_\rho \in L_V.$

In this paper we use local class field theory and the Artin--Hasse exponential to define a basis $\{c_\alpha \mid \alpha \in W\}$ of $H^1(G_{F_\mathfrak{p}}, \FFpb(\chi_1\chi_2^{-1}))$ explicitly (see Cor. \ref{cor:defn-basis}), where $W$ is an indexing set. We define a subset $J_V^\mathrm{AH}\subseteq W$ (see Defn. \ref{defn:J-V}) and, roughly speaking, we define $L_V^\mathrm{AH}:=\mathrm{span}(\{c_\alpha \mid \alpha \in J_V^\mathrm{AH}\})$ (see Defn. \ref{defn:L_V^AH}). The main theorem of this paper is as follows.

\begin{thmx} The following subspaces of $H^1(G_{F_\mathfrak{p}}, \FFpb(\chi_1\chi_2^{-1}))$ are equal:
\[
L_V = L_V^\mathrm{AH}.
\]
\end{thmx}

We emphasise that the definition of $L_V^\mathrm{AH}$ is in terms of explicitly defined basis elements, and no $p$-adic Hodge theory is needed for our definition. Suppose we define a set of Serre weights $W^\mathrm{AH}(\rho)$ via: $V\in W^\mathrm{AH}(\rho|_{G_{F_\mathfrak{p}}})$ if and only if $c_\rho \in L_V^\mathrm{AH}.$ The set of Serre weights $W^\mathrm{AH}(\rho)$ is now defined in terms of local class field theory. As a consequence we get an explicit definition of $W(\rho)$ for arbitrary totally real fields $F$ and representations $\rho:G_F\to \GL_2(\FFpb)$. In our result we have no restrictions on the ramification of $p$ in $F$. From the previous theorem together with the results of \cite{gls15}, we immediately obtain the following corollary.

\begin{thmx}
Let $F$ be a totally real field and $p>2$ a prime. Suppose $\rho:G_F\to \GL_2(\FFpb)$ is a continuous representation. Suppose that $\rho$ is modular, that $\rho|_{G_F(\zeta_p)}$ is irreducible and if $p = 5$, suppose further that the projective image of $\rho|_{G_F(\zeta_p)}$ is not isomorphic to $A_5.$

For each place $\mathfrak{p}\mid p$ of $F$ with residue field $f_\mathfrak{p}$, let $V_\mathfrak{p}$ be a Serre weight of $\GL_2(f_\mathfrak{p}).$ Then $\rho$ is modular of weight $\otimes_{\mathfrak{p}|p} V_\mathfrak{p}$ if and only if $V_\mathfrak{p} \in W^\mathrm{AH}(\rho|_{G_{F_\mathfrak{p}}})$ for all $\mathfrak{p}\mid p$.
\end{thmx}

We note that the condition $p>2$ and the so-called Taylor--Wiles condition in this statement is due to the use of modularity lifting theorems in \cite{gls15}, but is not needed for the local results of this paper.

The approach of redefining $L_V$ explicitly in terms of local class field theory and the Artin--Hasse exponential can first be found in \cite{ddr16} under the assumption that $p$ is unramified in $F$. Assuming unramifiedness the authors of that paper construct a basis $\{c_i\mid 0\le i <f\}$ of $H^1(G_{F_\mathfrak{p}}, \FFpb(\chi_1\chi_2^{-1}))$ and an indexing set $\mu(J)$. Roughly speaking, they conjecture that $\mathrm{span}(\{c_i\mid i \in \mu(J)\}) = L_V.$ This `unramified conjecture' was later proved in \cite{cegm17}. In the unramified case it follows straightforwardly that the bases appearing in our paper and \cite{ddr16} coincide (assuming the same choices were made when defining the bases). It also follows, although less straightforwardly, that the indexing sets  $J_V^\mathrm{AH}$ and $\mu(J)$ coincide. In this paper we may therefore say that we generalise the results of \cite{ddr16} and \cite{cegm17} to allow for arbitrary ramification of $p$ in $F$.

The existence of an explicit Serre weights conjecture when $p$ is ramified in the base field is of great importance for many applications. It may come as no surprise that an explicit recipe for the set of weights may be used to compute the set of weights explicitly. An example of this approach can be found in \cite[\S8]{ddr16} in which the explicit description is used to calculate $W(\rho)$ in some non-trivial cases. One immediate application of the results in this paper in forthcoming work is the strengthening of \cite[Thm.~4.9]{ds15}. This theorem is the main local result used to prove \textit{weight elimination} (i.e. the set of weights is a subset of the set of predicted weights) for generic representations in \cite{ds15}. We can use the explicit results of this paper to show this theorem holds for a greater class of representations while at the same time largely simplifying the proof of the theorem. We have reasons to expect our class of representations to be optimal for this theorem to hold.

\subsection{Brief history and outline of the paper}
For a totally real field $F$, a prime $p$ and a mod $p$ Galois representation $\rho:G_F\to \GL_2(\FFpb)$, the question whether this representation can arise as the reduction of the Galois representation associated to a Hilbert modular form was first rigorously studied in \cite{bdj10} under the hypothesis that $p$ is unramified in $F$. This being a generalisation of Serre's famous conjecture \cite{ser87} for $F=\QQ$, it was long believed that a similar conjecture could be formulated for totally real fields. In the conjecture of \cite{bdj10}, the authors were able to predict the set of weights of Hilbert modular forms leading to the given representation precisely for the first time. The unramifiedness hypothesis was later removed in \cite{sch08}, \cite{gee11} and \cite{blgg13}. Building on previous work of Gee and co-authors it was proved in \cite{gls15} for $p>2$ that the predicted set of weights of \cite{bdj10} and its extensions is correct in the sense that if $\rho$ comes from a Hilbert modular form then it must come from a form of weight predicted by the conjectures and every predicted weight occurs. In \cite{wan17} the local results of \cite{gls15} and their proof of the \textit{weight elimination} part of the conjecture were extended to $p=2$. One unifying aspect of all these conjectures is that the predicted sets of weights are defined using abstract $p$-adic Hodge theory making it complicated to do explicit computations in many cases. The main goal of this paper is to give a more explicit definition of the set of weights attached to $\rho$.

In \S\ref{sec:background-notation} we will recall the usual definition of the set of weights $W(\rho)$ as found in the literature. Many equivalent definitions exist, however none of them explicit in the sense of the definition later introduced in this paper, so we pick a definition that is suitable to us.

In \S\ref{chap:expl-basis} we define an explicit basis of $H^1(G_K, \FFpb(\chi))$ using local class field theory and the Artin--Hasse exponential. We do this by first defining a filtration on $H^1(G_K,\FFpb(\chi))$ given in terms of restrictions to higher ramification groups. We study the jumps in this filtration. Then we use local class field theory to relate this filtration to a filtration on the units of a local field $M$ by higher unit groups. The Artin--Hasse exponential proves to be the perfect tool to get good control of elements in these higher unit groups increasingly closer to 1. Therefore, we can write down a basis of each graded piece for the filtration by unit groups and the `duality' with $H^1$ allows us to `translate' this back into a basis of $H^1(G_K,\FFpb(\chi)).$

In \S\ref{chap:proof} we first use the classification theorem of \cite{gls15} to give the distinguished spaces $L_V$ (after restriction to $H^1(G_{K_\infty},\FFpb(\chi))$) in terms of \'etale-$\varphi$ modules. Next we use Artin--Schreier theory and restriction to the absolute Galois group of a local field $M_\infty$ over which the character $\chi$ becomes trivial to describe $L_V$ as a subspace of $\Hom(G_{M_\infty},\FFpb).$ The basis from \S\ref{chap:expl-basis} is easily expressed in terms of $G_{M_\infty}$ using local class field theory because it was defined in terms of higher unit groups of $M$. Then we use an explicitly known formula for the evaluation pairing $\Hom(G_{M_\infty},\FFpb)\times G_{M_\infty}\to \FFpb$ to compare the basis elements of \S\ref{chap:expl-basis} to $L_V$.

Our explicit definition of the space $L_V$ is given in Definition \ref{defn:L_V^AH}. In this definition some unfamiliar quantities appear such as an indexing set $J_V^\mathrm{AH}$ defined in Definition \ref{defn:J-V}. All these quantities can be computed easily and quickly on a computer using any programming language that supports basic integer operations. This makes our reformulation of the conjecture really explicit. Nonetheless, it would be satisfying to find a more direct formula for expressing the indexing set $J_V^\mathrm{AH}$ as was done in \cite[\S7]{ddr16} under the assumption that $p$ is unramified in $F$. Some progress under other simplifying assumptions is made towards finding a direct formula for $J_V^\mathrm{AH}$ in \cite[Ch.~7]{ste20}, but in general the problem is still open.

We remark that special cases of \S\ref{chap:expl-basis} and \S\ref{chap:proof} are related to Abrashkin’s papers \cite{abr89} and \cite{abr97}.

\subsection{Acknowledgements}

I would like to thank Fred Diamond for suggesting this problem and his guidance over the years. I would also like to thank Victor Abrashkin, Robin Bartlett, Laurent Berger, Martin Bright, Dougal Davis, Toby Gee, Pol van Hoften, Ashwin Iyengar, Robin de Jong, James Newton, Esther Polak and Jan Vonk for helpful mathematical conversations and suggestions. This work was supported by the Engineering and Physical Sciences Research Council [EP/L015234/1]. During the research the author was a registered student at The EPSRC Centre for Doctoral Training in Geometry and Number Theory (The London School of Geometry and Number Theory), University College London.

\section{Background and notation}\label{sec:background-notation}

Let $F$ be a totally real field and $p$ a fixed prime. Given a representation $\rho: G_F\to \GL_2(\FFpb)$, in this section we will carefully define the set of Serre weights $W(\rho)$ attached to $\rho$ using $p$-adic Hodge theory as found in the literature. The definition we give here is \cite[Defn.~4.1.4]{blgg13}; we refer the reader to their paper and \cite{gls15} for a discussion of different possible definitions of the sets of weights attached to $\rho$. 

One of the important aspects of Serre's conjecture for a totally real field $F$ is that the set of weights $W(\rho)$ associated to this representation only depends on the restriction of $\rho$ to local representations $\restr{\rho}{G_{F_\mathfrak{p}}}:G_{F_\mathfrak{p}}\to\GL_2(\FFpb)$, where $\mathfrak{p}\mid p$ is a prime of $F$ dividing $p$ and $F_\mathfrak{p}$ denotes the completion of $F$ at this prime. This allows us to write the whole paper locally, because we get back to the global case via
\[
W(\rho):=\left\{V=\otimes_{\mathfrak{p}\mid p} V_\mathfrak{p}\;\middle |\; V_\mathfrak{p}\in W(\restr{\rho}{G_{F_\mathfrak{p}}})\text{ for all }\mathfrak{p}\mid p\right\}.
\] Therefore, we assume from now on $\rho:G_K\to\GL_2(\FFpb),$ where $K$ is a finite extension of $\QQp$. Another important feature is that the recipe for the set of weights $W(\rho)$ associated to $\rho$ is already completely explicit in the case that $\rho$ is an irreducible representation. Since our goal is to obtain an explicit way of describing this set, we may therefore assume that $\rho$ is reducible.

Since $\rho$ is reducible, we may write
\[
\rho\sim \begin{pmatrix}\chi_1 & * \\ 0 & \chi_2\end{pmatrix}=\chi_2\otimes\begin{pmatrix}\chi & c_\rho \\ 0 & 1\end{pmatrix}
\]
for some characters $\chi_1,\chi_2:G_K\to\FFpb^\times$ and where $\chi:=\chi_1\chi_2^{-1}.$ It is not hard to check that $c_\rho$ defines a cocycle in $H^1(G_K,\FFpb(\chi)),$ which is well-defined up to multiplication by a scalar in $\FFpb^\times.$ We will now define a subspace $L_V\subseteq H^1(G_K,\FFpb(\chi))$ depending on a so-called \emph{Serre weight} $V$. The set of weights $W(\rho)$ associated to $\rho$ is then defined via
\[
V\in W(\rho) \iff c_\rho \in L_V.
\]
In other words, an explicit description of the set of weights $W(\rho)$ is equivalent to an explicit description of the subspace $L_V$ for all Serre weights $V$. In the remainder of the paper we will try to do the latter.

\begin{defn}\label{defn:Serre-wt} A \textbf{Serre weight} for $K$ is an irreducible $\FFpb$-representation of $\GL_2(k)$, which is necessarily of the form
\[
V_{\underline{\eta},\underline{\theta}}:=\bigotimes_{\lambda \in \Hom(k,\FFpb)} (\det{}^{\theta_\lambda}\otimes_{k}\mathrm{Sym}^{\eta_\lambda-\theta_\lambda}k^2)\otimes_{k,\lambda}\FFpb
\]
for some uniquely determined integers $\eta_\lambda,\theta_\lambda$ with $\theta_\lambda,\eta_\lambda-\theta_\lambda\in[0,p-1]$ for all $\lambda$ and $\theta_\lambda<p-1$ for at least one $\lambda.$ 
\end{defn}

For a crystalline representation $\widetilde{\rho}: G_K\to \GL_2(\QQpb)$ and $\tau\in\Hom(K,\QQpb)$, we let $\mathrm{HT}_\tau(\widetilde{\rho})$ denote the $\tau$-labelled \textbf{Hodge--Tate weights} -- see \S\ref{subsec:notation} for our conventions on Hodge--Tate weights. Furthermore, we say that $\widetilde{\rho}$ has \textbf{Hodge type} $(\undr\eta,\undr\theta)$ if $\mathrm{HT}_{\tau_{i,0}}(\widetilde{\rho})=\{\theta_i,\eta_i+1\}$ and $\mathrm{HT}_{\tau_{i,j}}=\{0,1\}$ for $j>0.$ Following \cite{gls15} we will say a representation is \textbf{pseudo-Barsotti--Tate} (or pseudo-BT) of weight $\{\eta_i+1\}$ if it has Hodge type $(\eta,\undr 0).$ We will often write $\{r_i\}$ where $r_i:=\eta_i+1$ for the weight of a pseudo-BT representation.

\begin{defn}\label{defn:dist-subspc} For any ordered pair of characters $\chi_1,\chi_2:G_K\to\FFpb^\times$ and a Serre weight $V_{\undr\eta,\undr\theta}$, let the \textbf{distinguished subspace} $L_{V_{\undr\eta,\undr\theta}}(\chi_1,\chi_2)$ be defined as the subset of $H^1(G_K,\FFpb(\chi))$ obtained as the reduction of all crystalline representations of Hodge type $(\undr\eta,\undr\theta)$ of the form
\[
\begin{pmatrix}\widetilde{\chi}_1 & * \\ 0 & \widetilde{\chi}_2\end{pmatrix},
\]
where $\widetilde{\chi}_1$ and $\widetilde{\chi}_2$ are any crystalline lifts of $\chi_1$ and $\chi_2,$ respectively. 
\end{defn}
Note that any such reduction gives a representation which is an extension of $\chi_2$ by $\chi_1$ and, therefore, by the process explained above a class in $H^1(G_K,\FFpb(\chi))$. When $L_{V_{\undr\eta,\undr\theta}}(\chi_1,\chi_2)$ is non-empty, it is a subspace -- this follows, for example, from the results of \cite{gls15}.

It follows from the definitions that
\[
L_{V_{\undr\eta,\undr\theta}}(\chi_1,\chi_2)=L_{V_{\undr\eta-\undr\theta,\undr 0}}(\prod_i \omega_i^{-\theta_i}\otimes\chi_1,\prod_i \omega_i^{-\theta_i}\otimes\chi_2).
\]
Therefore it suffices to give an explicit description of the distinguished subspaces (for aribtrary $\chi_1$ and $\chi_2$) in the case when the determinant in our Serre weight is trivial, or, equivalently, we only consider reductions of pseudo-Barsotti--Tate representations in the definition of the distinguished subspaces.

\subsection{Notation}\label{subsec:notation}

In this paper $p$ will denote a fixed prime number and $K$ will be a finite extension of $\QQp$ with ramification index $e$, residue degree $f$ and residue field $k$. We will fix all necessary algebraic closures and embeddings between them; $G_K:=\Gal(\overline{K}/K)$ denotes the absolute Galois group of $K$ and similarly for other fields. We fix an embedding $\overline{\tau}_0\in \Hom(k,\FFpb)$ and label the remaining embeddings of the residue field via the rule $\overline{\tau}_{i+1}^p=\overline{\tau}_i.$ This is the convention as in \cite{gls15} and \cite{cegm17} and opposite to \cite{ddr16}. We now label the embeddings
\[
\Hom(K,\QQpb)=\{\tau_{i,j}\mid i=0,\dotsc,f-1;\:j=0,\dotsc,e-1\}
\]
in any way such that the embedding $k\to \FFpb$ induced by $\tau_{i,j}$ is $\overline{\tau}_i.$ We fix a uniformiser $\pi_K$ of $K$ and set $\pi_0:=\pi_K.$ For each $n\ge 1$ we choose a $p^n$-th root of $\pi_K$ such that $\pi_{n+1}^p=\pi_n.$ We define the rising union $K_\infty:=\bigcup_{n\ge 0} K(\pi_n);$ when $p=2$ we, furthermore, require that $\pi_K$ was chosen so that $K_\infty\cap K_{p^\infty}=K$, where $K_{p^\infty}:=\bigcup_{n\ge 1} K(\zeta_{p^n})$ with $\zeta_{p^n}$ a primitive $p^n$-th root of unity in $\overline{K}$ -- it follows from \cite[Lem.~2.1]{wan17} that this is always possible. For any unramified extension $L$ of $K$ we let $\Frob_K\in\Gal(L/K)$ denote the arithmetic Frobenius.

Let $\pi$ be a root of $x^{p^f-1}+\pi_K$ in $\overline{K}$. Recall that we have a character $\overline{\omega}_\pi: G_K\to k^\times$ defined by $\sigma \mapsto \sigma(\pi)/\pi \bmod{\pi_K}.$ For an embedding $\overline{\tau}\in \Hom(k,\FFpb)$, we get a character $\omega_{\overline{\tau}}:G_K\to \FFpb^\times$ via $\omega_{\overline{\tau}}:=\overline{\tau}\circ \overline{\omega}_\pi.$ For $\overline{\tau}=\overline{\tau}_i$ we will sometimes simply denote this character by $\omega_i;$ these characters are often called the \textbf{fundamental characters} of level $f$. The ($p$-adic) \textbf{cyclotomic character} $\epsilon:G_K\to \ZZp^\times$ of $K$ is defined by $\sigma(\zeta)=\zeta^{\epsilon(\sigma)}$ for any $\sigma\in G_K$ and $\zeta$ any compatible sequence of higher $p$-th roots of unity in $\mu_{p^\infty}(\overline{K})$; by abuse of notation for its reduction modulo $p$ we will sometimes use the same terminology: $\epsilon:G_K\to \FFp^\times.$

Let $\CC_K$ denote the completion of $\overline{K}.$ For an integer $i,$ we write $\CC_K(i)$ for the $i$-th \textbf{Tate twist}, i.e. the same underlying vector space with $G_K$ acting as the $i$-th power of the $p$-adic cyclotomic character. Given a crystalline representation $\rho:G_{K}\to \GL(V)$ on a $\QQpb$-vector space $V$ we define its Hodge--Tate weights as follows: for $\tau\in\Hom(K,\QQpb)$ we define the $\tau$-labelled \textbf{Hodge--Tate weights} $\mathrm{HT}_\tau(\rho)$ of $\rho$ to be the multiset of integers containing the integer $i$ with multiplicity
\[
\dim_{\QQpb}(V\otimes_{\tau, K} \CC_K(-i))^{G_K},
\]
where Galois acts diagonally. It follows from the definition that the cyclotomic character $\epsilon$ has Hodge--Tate weight 1.

\section{Explicit bases of spaces of extensions}\label{chap:expl-basis}

In this section we define an explicit basis of $H^1(G_K,\FFpb(\chi))$ in terms of local class field theory and the Artin--Hasse exponential. In the next section we will use this basis to give an explicit description of the distinguished subspaces $L_V$ from \S\ref{sec:background-notation}. Let us emphasise that the basis elements in this section may depend on the choice of uniformiser we make, but in the next section we will take an appropriate subset of basis elements spanning a subspace which is independent of this choice. The results of this section can be seen as generalising results of \S3 and \S5 of Demb\'el\'e, Diamond and Roberts \cite{ddr16}.

\subsection{The filtration on \texorpdfstring{$H^1(G_K,\FFpb(\chi))$}{H1(GK,FFpb(chi))}}\label{sec:filtration}

Let $\pi$ be a root of $x^{p^f-1}+\pi_K$ in $\overline{K}.$ Recall that we have a character $\overline{\omega}_\pi:G_K\to k^\times$ defined by $\sigma \mapsto \sigma(\pi)/\pi.$ For $i=0,\dotsc,f-1,$ we get the fundamental characters $\omega_i:G_K\to\FFpb^\times$ of level $f$ by defining $\omega_i:=\overline{\tau}_i\circ \overline{\omega}_\pi.$ If $\chi:G_K\to\FFpb^\times$ is any continuous character, then we can write $\restr{\chi}{I_K}=\prod_i\restr{\omega_i^{a_i}}{I_K}$ for $a_i\in [1,p].$\footnote{Our choice of $a_i\in[1,p]$ instead of $a_i\in[0,p-1]$ is mainly to be consistent with the conventions of \cite{ddr16}. None of the results in this paper depend on this convention, but some combinatorial results may change with a different convention -- it is possible that easier combinatorics was the motivation of this choice in \cite{ddr16} initially.} The $f$-tuple $(a_0,a_1,\dotsc,a_{f-1})$ is uniquely determined by $\restr{\chi}{I_K}$ if we, furthermore, require that $a_i<p$ for at least one $i$. We will call this $f$-tuple the \textbf{tame signature} of $\chi$; this is an element of the set
\[
S:=\{1,2,\dotsc,p\}^f-\{(p,p,\dotsc,p)\}.
\]
We define an action of $\Gal(k/\FFp)=\langle \Frob\rangle\cong \ZZ/f\ZZ$ on the set $S$ via
\[
\Frob\cdot(a_0,a_1,\dotsc,a_{f-1})=(a_{1},a_2,\dotsc,a_{0}).
\]
Note that this notation makes sense since if $\chi$ has tame signature $\vec{a},$ then $\Frob\circ \chi$ has tame signature $\Frob(\vec{a}).$ We define the \textbf{period} of $\vec{a}\in S$ to be the cardinality of its orbit in $S$ under the action of $\Gal(k/\FFp)$. We will call the period of the tame signature of $\chi$ the \textbf{absolute niveau} of the character. We will write $f'$ for the absolute niveau of $\chi$ and $f'':=f/f'$ for the size of the stabiliser of the action of $\Gal(k/\FFp).$ Whenever convenient we will consider the indices of the $a_i$ to be elements of $\ZZ/f\ZZ,$ so $a_f=a_0$ etcetera. We can also write $\restr{\chi}{I_K}=\restr{\omega_i^{n_i}}{I_K}$ for all $0\le i<f$, where we define\footnote{We note that our definition of $n_i$ differs from \cite{ddr16}, since we assumed our embeddings to satisfy $\ovr{\tau}_{i+1}^p=\ovr{\tau}_i$ rather than $\ovr{\tau}_{i}^p=\ovr{\tau}_{i+1}$ as is assumed there.} 
\[
n_i:= \sum_{j=1}^{f} a_{i+j}p^{f-j}=a_{i+1}p^{f-1}+\dots+a_{i+f-1}p+a_i.
\]

\subsubsection{Definition of the filtration}

Suppose $\chi$ is a continuous character $G_K\to\FFpb^\times.$ Recall from \cite[Ch.~IV,\S3]{ser79} that we have a decreasing filtration on $G_K$ by closed subgroups $G_K^u$ given by the upper numbering on $G_K$ for $u\in\RR$. Recall also that $G_K^u=G_K$ for $u\le -1,$ that for $-1< u\le 0$ we have $G_K^u=I_K,$ the inertia subgroup of $G_K,$ and that $\bigcup_{u>0} G_K^u=P_K,$ the wild inertia subgroup of $G_K.$  For $s\in \RR,$ we will define an \textbf{increasing filtration} $\Fil^s$ on $H^1(G_K,\FFpb(\chi))$ by the rule
\[
\Fil^{s}H^1(G_K,\FFpb(\chi)):=\bigcap_{u>s-1} \ker\left(H^1(G_K,\FFpb(\chi))\xrightarrow{\hspace{2mm}\mathrm{res}\hspace{2mm}}H^1(G^u_K,\FFpb(\chi))\right).
\]
We will, furthermore, define
\[
\Fil^{< s}H^1(G_K,\FFpb(\chi)):=\bigcup_{t<s} \Fil^{t} H^1(G_K,\FFpb(\chi))
\]
and define the associated grading by
\[
\gr^s\left(H^1(G_K,\FFpb(\chi))\right):=\frac{\Fil^{s}H^1(G_K,\FFpb(\chi))}{\Fil^{< s}H^1(G_K,\FFpb(\chi))}.
\]

\subsubsection{The filtration for \texorpdfstring{$s \le 1$}{s<=1}}

Let us first study this filtration on the interval $s\in(-\infty,1].$ We see immediately that $\Fil^s H^1(G_K,\FFpb(\chi))=0$ for any $s < 0$. For $0\le s < 1,$ we have
\[
\Fil^{s}H^1(G_K,\FFpb(\chi))=\ker\left(H^1(G_K,\FFpb(\chi))\xrightarrow{\hspace{2mm}\mathrm{res}\hspace{2mm}}H^1(I_K,\FFpb(\chi))\right).
\]
Since $\bigcup_{u>0} G_K^u=P_K,$ a class $c$ of $H^1(G_K,\FFpb(\chi))$ lies in $\Fil^1 H^1(G_K,\FFpb(\chi))$ if and only if $z(P_K)=0$ for any cocycle $z$ representing $c$. Since $P_K$ is the maximal pro-$p$ subgroup of $I_K$, we see that $H^1(I_K/P_K,\FFpb(\chi))=0$ and it follows from inflation-restriction that the restriction map
\[
H^1(I_K,\FFpb(\chi))\to H^1(P_K,\FFpb(\chi))
\] 
is injective. Therefore, any cocycle with trivial restriction to $P_K$ must also have trivial restriction to $I_K$. In other words, 
\[
\Fil^1 H^1(G_K,\FFpb(\chi))=\Fil^0 H^1(G_K,\FFpb(\chi)).
\]

\subsubsection{Filtered piece as kernel}

Suppose that $s>1.$ If $z$ is a cocycle representing a cohomology class in $\Fil^{< s}$, then clearly $z(G_K^{s-1})=0.$ Conversely, if $z$ is a continuous cocycle such that $z(G_K^{s-1})=0$, then it follows from the identity $G_K^{s-1}=\bigcap_{v<s-1} G_K^v$ that 
\[
\ker(z) \cup \bigcup_{v<s-1} \left(G_K-G_K^v\right)
\]
is an open cover of $G_K$. Compactness of $G_K$ then implies that there is a $v_0<s-1$ such that $G_K^{v_0}\subseteq \ker(z).$ In other words, we have just proved that
\[
\Fil^{< s}H^1(G_K,\FFpb(\chi))=\ker\left(H^1(G_K,\FFpb(\chi))\xrightarrow{\hspace{2mm}\mathrm{res}\hspace{2mm}}H^1(G^{s-1}_K,\FFpb(\chi))\right).
\]

\subsubsection{The jumps in the filtration}\label{subsec:filjmp} Having defined the filtration and its most basic properties, we are now interested in the jumps of the filtration and the size of these jumps. That is, we would like to study the dimensions of the spaces $\gr^s\left(H^1(G_K,\FFpb(\chi))\right)$. Fix a continuous character $\chi:G_K\to\FFpb^\times$ and let the $n_i$ for $i=0,\dotsc,f-1$ be as above. The following theorem gives the answer.

\begin{thm}\label{thm:ddr3.1}
For $s\in\RR$, write $d_s:=\dim_{\FFpb}\gr^s\left(H^1(G_K,\FFpb(\chi))\right).$ Then $d_s=0$ unless $s=0$ or $1< s \le 1+\frac{pe}{p-1}.$ Moreover, if $1< s < 1+\frac{pe}{p-1}$ and $d_s\neq 0$ then $s=1+\frac{m}{p^f-1}$ for an integer $m\not\equiv 0 \bmod p$. More precisely, the dimensions $d_s$ are given by
\begin{enumerate}[\hspace{.5cm}(1)]
\item \label{pt1} $d_0=1$ if $\chi$ is trivial and $d_0=0$ otherwise.

\item \label{pt2} if $1<s<1+\frac{pe}{p-1}$ and $s=1+\frac{m}{p^f-1}$ for $p\nmid m$, then
\[
d_s=\#\left\{i\in\{0,\dotsc,f-1\}\mid m\equiv n_i \bmod{p^f-1}\right\}.
\]

\item \label{pt3} $d_{1+\frac{ep}{p-1}}=1$ if $\chi$ is cyclotomic and $d_{1+\frac{ep}{p-1}}=0$ otherwise. 
\end{enumerate}
\end{thm}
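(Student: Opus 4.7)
The strategy is to identify each graded piece $\gr^s$ with a space of equivariant homomorphisms out of a graded piece of the upper ramification filtration on $G_K$, and then compute these using local class field theory.

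Part (1) follows immediately from the preliminary computations: $\gr^0 = \Fil^0 = \ker(H^1(G_K,\FFpb(\chi)) \to H^1(I_K,\FFpb(\chi)))$, which by inflation--restriction equals $H^1(G_K/I_K,\FFpb(\chi)^{I_K})$. Since $G_K/I_K \cong \hat{\ZZ}\cdot\Frob_K$, this cohomology equals the $\Frob_K$-coinvariant space of $\FFpb(\chi)^{I_K}$. A direct case split (trivial, unramified nontrivial, or ramified $\chi$) yields $d_0 = 1$ in the first case and $0$ in the other two.

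For $s > 1$, combining the characterisations of $\Fil^s$ and $\Fil^{<s}$ from \S\ref{sec:filtration} produces the identification
\[
\gr^s\!\left(H^1(G_K,\FFpb(\chi))\right) \cong \Hom\bigl(G_K^{s-1}/G_K^{s-1+},\,\FFpb(\chi)\bigr)^{G_K},
\]
with $G_K^{s-1+} := \overline{\bigcup_{u > s-1}G_K^u}$; here $G_K$ acts on the source by conjugation (which factors through $G_K/G_K^{s-1}$) and on $\FFpb(\chi)$ via $\chi$. Since $\FFpb^\times$ has no $p$-torsion, $\chi$ factors through the tame quotient $G_K/P_K$, and in particular $G_K^{s-1} \subseteq P_K$ acts trivially on the coefficients, so such a restriction is a genuine homomorphism. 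To compute the right-hand side, let $M/K$ be the tame Galois extension trivialising $\chi$: since $|\Gal(M/K)|$ is coprime to $p$, Hochschild--Serre combined with local class field theory for $M$ yields
\[
H^1(G_K,\FFpb(\chi)) \;\cong\; \Hom_{\mathrm{cont}}(M^\times,\FFpb)_\chi,
\]
the $\chi$-isotypic part for the $\Gal(M/K)$-action, and the filtration $\Fil^s$ transfers via the Herbrand function $\psi_{M/K}$ to a filtration compatible with the standard unit filtration $\{U_M^{(n)}\}$ on $M^\times$.

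For part (2), the graded pieces $U_M^{(n)}/U_M^{(n+1)} \cong k_M$ carry a tame-inertia action through fundamental characters whose exponents are determined by $n$ and $\psi_{M/K}$; matching these with $\restr{\chi}{I_K} = \prod_i\omega_i^{a_i}$ and expanding in the $n_i$ pins down nonzero contributions precisely at $s = 1 + m/(p^f-1)$ with $p\nmid m$, with multiplicity equal to the number of $i \in \{0,\dotsc,f-1\}$ satisfying $m \equiv n_i \bmod{(p^f-1)}$. For part (3), at the endpoint $s = 1 + pe/(p-1)$, an additional $1$-dimensional contribution arises from the break of the $p$-adic cyclotomic character (coming from $\mu_p \subset M^\times$); this piece survives inside $H^1(G_K,\FFpb(\chi))$ exactly when $\chi$ is cyclotomic. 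The main obstacle is the careful bookkeeping between the upper-numbering filtration on $G_K$, its image on $G_M$ via $\psi_{M/K}$, and the unit filtration on $M^\times$ under CFT---together with making the tame-inertia action on each $U_M^{(n)}/U_M^{(n+1)}$ explicit enough to extract the $\chi$-isotypic dimension in terms of the combinatorics of the $n_i$.
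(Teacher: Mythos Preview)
Your overall approach matches the paper's: pass to a tame extension $M/K$ trivialising $\chi$, identify $H^1(G_K,\FFpb(\chi))\cong\Hom_{\Gal(M/K)}(M^\times/(M^\times)^p,\FFpb(\chi))$ via local class field theory, transport the ramification filtration through the Herbrand function $\psi_{M/K}$, and compute the $\chi$-isotypic part of each graded piece of the unit filtration.

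There are two points where your sketch is thinner than what the argument actually needs. First, the relevant graded pieces are those of $M^\times\otimes\FFp$, not of $\O_M^\times$; your identification ``$U_M^{(n)}/U_M^{(n+1)}\cong k_M$'' ignores the intersection with $(M^\times)^p$. The paper handles this by a valuation calculation showing $U_m\cap(M^\times)^p\subset U_{m+1}$ precisely when $p\nmid m$ and $0<m<\frac{epe_M}{p-1}$, which is also what forces the vanishing at $s$ with $p\mid m$. Second, and relatedly, the paper does \emph{not} compute $d_s$ directly at every $s$: it first proves a separate combinatorial identity (Proposition~\ref{prop:dimcount} and Corollary~\ref{cor:dimcount}) that the \emph{claimed} dimensions already sum to $\dim_{\FFpb}H^1(G_K,\FFpb(\chi))$. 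This reduces the work to checking $d_s=d_s'$ only at those $s$ with $d_s'>0$; the vanishing elsewhere then comes for free. Your proposal omits this reduction, so as written you would owe a direct proof of vanishing at all other $s$. Finally, for (3) the paper's argument is not via ``$\mu_p\subset M^\times$'' but via an explicit computation of the cokernel of the $p$-power map $U_{n}/U_{n+1}\to U_m/U_{m+1}$ at $m=\frac{epe_M}{p-1}$, identifying it with $\FFp(\chi)$ using that $-p$ has a $(p-1)$st root in $M$.
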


\begin{rem}\label{rem:dim-is-f/f'}
Note that if $t,u\in\{0,\dotsc,f-1\}$ are such that $n_t$ and $n_u$ are both congruent to $m\bmod{(p^f-1)}$ then the uniqueness of the $n_i$ modulo $(p^f-1)$ implies that $n_t=n_u$. Hence, either $t=u$ or $\vec{a}$ has a non-trivial stabilizer in $\Gal(k/\FFp).$ This observation implies that, for $1<s<1+\frac{pe}{p-1},$ we have that $d_s=f/f'$ whenever $d_s>0$, where $f'$ is the absolute niveau of $\chi$.
\end{rem}

For the theorem to make sense, of course, the sum of the dimensions $d_s$ must add up to the $\FFpb$-dimension of $H^1(G_K,\FFpb(\chi)).$ Since we will need this in the proof of the theorem, let us prove it first. To contrast the actual dimensions of the graded pieces with the dimensions claimed in the second part of the theorem above, let us write $d_s'$ for the values of the dimensions claimed in the theorem.

\begin{prop}\label{prop:dimcount} For $j=0,\dotsc,e-1,$ we have
\[
\sum_{\frac{jp}{p-1}<\frac{m}{p^f-1}<\frac{(j+1)p}{p-1}} d_{1+\frac{m}{p^f-1}}'=f.
\]
\end{prop}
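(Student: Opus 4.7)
The sum in the proposition equals
\[
\sum_{i=0}^{f-1} \#\{m \in I_j : p \nmid m,\ m \equiv n_i \pmod{p^f-1}\},
\]
where $I_j := (jpL,(j+1)pL)$ and $L := (p^f-1)/(p-1)$. Using $pL = L + (p^f-1)$ (so $pL \equiv L \pmod{p^f-1}$), the translation $m \mapsto m - jpL$ reduces the problem to counting, for each $i$, the integers $m \in I_0 = (0, pL)$ with $p \nmid m$ and $m \equiv n_i - jL \pmod{p^f-1}$.

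Next, the recursion $p n_i = a_{i+1}(p^f-1) + n_{i+1}$ gives $n_{i+1} \equiv p\,n_i \pmod{p^f-1}$, so the multiset $\{n_i \bmod (p^f-1)\}_{i=0}^{f-1}$ is the $\cdot p$-orbit $O$ of $n_0$ in $\mathbb{Z}/(p^f-1)\mathbb{Z}$, with each element repeated $f'' = f/f'$ times; because $\cdot p$ fixes $L \bmod (p^f-1)$, the translate $O - jL$ is again a $\cdot p$-orbit of size $f'$. The problem therefore reduces to the orbit-wise claim: for every $\cdot p$-orbit $O \subseteq \mathbb{Z}/(p^f-1)\mathbb{Z}$ one has $\sum_{r \in O} M_r = |O|$, where $M_r := \#\{m \in (0, pL) : p \nmid m,\ m \equiv r \pmod{p^f-1}\}$. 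Applied to $O - jL$, this yields the desired total $f'' \cdot f' = f$.

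A direct calculation shows that for $r \in \{0,\ldots,p^f-2\}$ the only lifts of $r$ landing in $(0, pL)$ are $m = r$ (valid when $r > 0$) and $m = r + (p^f-1)$ (valid when $r < L$), and hence
\[
M_r \;=\; \mathbf{1}[d_0(r) \ne 0] \;+\; \mathbf{1}[r < L \text{ and } d_0(r) \ne 1],
\]
where $d_0(r) := r \bmod p$. Since $\cdot p$-orbits correspond to cyclic rotations of base-$p$ digit strings, the first indicator sums to $f' - N_0$ over $r \in O$, where $N_0$ is the number of zero digits in one period of the orbit's digit string.

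The crux is to show the second indicator sums to exactly $N_0$. Write one period as $\tilde s = (\tilde d_{f'-1},\ldots,\tilde d_0)$ and call a position $i$ \emph{significant} if $\tilde d_i \ne 1$. The condition $r < L$ is equivalent to the first non-$1$ digit from the top of $r$'s base-$p$ string being $0$. For each significant position $i$, exactly $L_i + 1$ cyclic shifts of $\tilde s$ place $i$ as this first non-$1$ from the top, where $L_i$ denotes the length of the maximal run of $1$'s cyclically after position $i$; along these shifts the bottom digit cycles through $\tilde d_{i+1},\tilde d_{i+2},\ldots,\tilde d_{i+L_i+1}$ (indices mod $f'$), of which the first $L_i$ equal $1$ by construction and only the last, $\tilde d_{i+L_i+1}$, is significant and therefore $\ne 1$. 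Thus each zero digit of $\tilde s$ contributes exactly one admissible shift while each digit $\ge 2$ contributes none, yielding the desired total $N_0$. The main obstacle is carrying out this cyclic digit enumeration cleanly, together with the edge cases $O = \{0\}$ and $\tilde s = (1,\ldots,1)$ (i.e.\ $r_0 = L$), both of which fit the formula $\sum_{r \in O} M_r = |O|$ trivially.
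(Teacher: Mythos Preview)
Your argument is correct. It takes a genuinely different route from the paper's proof, so let me contrast the two.

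The paper does not translate to $j=0$. Instead it uses that each $\tfrac{n_i}{p^f-1}$ lies in $\bigl[\tfrac{1}{p-1},\tfrac{p}{p-1}\bigr)$ and finds, for each $j$, the unique integer $k_j$ such that both $k_j$ and $k_j+1$ can shift this interval into $\bigl(\tfrac{jp}{p-1},\tfrac{(j+1)p}{p-1}\bigr)$; writing $j-1\equiv b\pmod{p-1}$ one has $k_j\equiv b+1\pmod p$. The two resulting conditions (an inequality together with $p\nmid m$) are then translated directly into conditions on the tame-signature digits $a_i\in\{1,\dots,p\}$: the first case contributes $\#\{a_\ell>b+1\}$ and the second $\#\{a_\ell\le b+1\}$, and these sum to $f$. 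So the paper stays in the ``$a_i\in[1,p]$'' normalisation and never passes to an abstract orbit statement.

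Your proof instead reduces to $j=0$ via $m\mapsto m-jpL$, then abstracts to the orbit-wise identity $\sum_{r\in O}M_r=|O|$ for an arbitrary $\cdot p$-orbit $O\subset\ZZ/(p^f-1)\ZZ$, and proves this by a cyclic run-of-ones analysis in the standard base-$p$ digits $\{0,\dots,p-1\}$. The gain is a cleaner, choice-free statement (valid for \emph{every} $\cdot p$-orbit, not just the one coming from the $n_i$), at the cost of the extra bookkeeping in your ``significant position'' count; the paper's version is shorter precisely because the normalisation $a_i\ge 1$ collapses your two indicators into the single dichotomy $a_\ell>b+1$ versus $a_\ell\le b+1$. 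One small point worth making explicit in your write-up: the shifts $b=i+L_i+1$ attached to distinct significant positions $i$ are distinct because $i\mapsto i+L_i+1$ is the ``next significant position'' map, a cyclic bijection on the significant positions; this is what guarantees the total is exactly $N_0$ rather than merely at most $N_0$.
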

\begin{rem}
In fact, the proposition and Remark \ref{rem:dim-is-f/f'} above imply that for any $j\ge 0$ the set of integers $m\in\ZZ$ satisfying
\begin{enumerate}
\item $\frac{jp}{p-1}<\frac{m}{p^f-1}<\frac{(j+1)p}{p-1}$,
\item $p\nmid m$ and 
\item there exists an $i$ such that $m\equiv n_i\bmod{(p^f-1)}$
\end{enumerate}
has cardinality $f',$ where $f'$ is the absolute niveau of $\chi.$ We will need this later in \S\ref{sec:expl-basis}.
\end{rem}
\begin{proof}
We have the inequalities $\frac{1}{p-1}\le \frac{n_i}{p^f-1} < \frac{p}{p-1}$ for all $i.$ It is easy to show explicitly that there exists a unique integer $k_j$ such that
\[
\left(\left[\frac{1}{p-1},\frac{p}{p-1}\right)+k_j\right) \cap \left(\frac{jp}{p-1},\frac{(j+1)p}{p-1}\right) \neq \varnothing
\]
and
\[
\left(\left[\frac{1}{p-1},\frac{p}{p-1}\right)+k_j+1\right) \cap \left(\frac{jp}{p-1},\frac{(j+1)p}{p-1}\right) \neq \varnothing.
\]
Moreover, if we write $j-1\equiv b \bmod{p-1}$ for $0\le b <p-1$ (when $p=2$ we simply let $b=0$), then we can show $k_j\equiv b+1 \bmod p.$ We are interested in counting the number of $n_i$ that satisfy
\begin{equation}\label{eqn:1}
\frac{jp}{p-1} < \frac{n_i}{p^f-1}+k_j\text{ and }n_i+k_j(p^f-1)\not\equiv 0 \bmod{p}
\end{equation}
or
\begin{equation}\label{eqn:2}
\frac{n_i}{p^f-1}+(k_j+1) < \frac{(j+1)p}{p-1}\text{ and }n_i+(k_j+1)(p^f-1)\not\equiv 0 \bmod{p}
\end{equation}
or both, in which case they should be counted twice. We don't have to worry about upper or lower bounds in the first or second case, respectively, since these are automatically satisfied.

In the case of Equation (\ref{eqn:1}) the inequality reduces to the inequality
\[
\frac{n_i}{p^f-1}>\frac{b+1}{p-1}.
\]
This inequality is satisfied if and only if $n_i$ has coordinates 
\[
(a_i,a_{i+1},\dotsc,a_j,\dotsc)=(a_i, b+1, b+1,\dotsc,b+1,a_j,\dotsc)
\]
with $a_j>b+1.$ If we, furthermore, require that $n_i+k_j(p^f-1)\not\equiv 0 \bmod{p},$ we see that this requirement is equivalent to requiring that $a_i\neq b+1.$ Hence, the number of such $n_i$ is the same as the number $\#\{a_j>b+1\}.$

In the case of Equation (\ref{eqn:2}) the inequality reduces to the inequality
\[
\frac{n_i}{p^f-1}<\frac{b+2}{p-1}.
\]
This inequality is satisfied if and only if $n_i$ corresponds to a tuple 
\[
(a_i,a_{i+1},\dotsc,a_j,\dotsc)=(a_i,b+2,b+2,\dotsc,b+2, a_j,\dotsc)
\]
with $a_j\le b+1$. If we also require $n_i+(k_j+1)(p^f-1)\not\equiv 0 \bmod{p},$ we find that $a_i\neq b+2.$ Hence, in this case we see that the number of $n_i$ satisfying both conditions is $\#\{a_j \le b+1\}.$

It follows that the total number of $n_i$ satisfying one of these equations, counted with multiplicity, is 
\[
\#\{a_j>b+1\}+\#\{a_j \le b+1\}=f.
\]
\end{proof}

A straightforward calculation using the local Euler--Poincar\'e characteristic gives 
\[
\dim_{\FFpb} H^1(G_K,\FFpb(\chi))=
\begin{cases} 
ef+2 &\text{if $\chi$ is both trivial and cyclotomic;} \\
ef+1 &\text{if $\chi$ is trivial, but not cyclotomic;} \\
ef+1 &\text{if $\chi$ is cyclotomic, but not trivial;} \\ 
ef &\text{otherwise.} \end{cases}
\]
From the proposition above and this observation we immediately get the following corollary.
\begin{cor}\label{cor:dimcount}
Let the $d_s'$ denote the dimensions as claimed in Theorem \ref{thm:ddr3.1}. Then
\[
\sum_{1<s<1+\frac{ep}{p-1}} d_s'=ef.
\]
Hence, we find $\sum_s d_s' =\dim_{\FFpb} H^1(G_K,\FFpb(\chi)).$
\end{cor}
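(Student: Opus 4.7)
The proof is essentially a bookkeeping exercise assembling Proposition \ref{prop:dimcount} with the Euler--Poincar\'e computation displayed just above. I would proceed as follows.

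First I would sum the identity of Proposition \ref{prop:dimcount} over $j=0,1,\dotsc,e-1$. The left-hand sides paste together into a sum over all $m$ with $0<\frac{m}{p^f-1}<\frac{ep}{p-1}$, \emph{provided} no $d'_s$ is lost at the interior boundary points $\frac{jp}{p-1}$ for $1\le j\le e-1$. But at such a boundary $\frac{m}{p^f-1}=\frac{jp}{p-1}$ forces $(p-1)m=jp(p^f-1)$, and since $\gcd(p,p-1)=1$ this gives $p\mid m$. By part (\ref{pt2}) of Theorem \ref{thm:ddr3.1}, the value $d'_{1+m/(p^f-1)}$ is zero whenever $p\mid m$, so boundary points contribute nothing and the sums telescope cleanly to yield
\[
\sum_{1<s<1+\frac{ep}{p-1}} d_s'=\sum_{j=0}^{e-1}f=ef,
\]
which is the first claim of the corollary.

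For the second claim, I would add the contributions at the two remaining possible jumps $s=0$ and $s=1+\frac{ep}{p-1}$, using parts (\ref{pt1}) and (\ref{pt3}) of Theorem \ref{thm:ddr3.1}. This gives
\[
\sum_{s} d_s' \;=\; ef+\mathbbm{1}_{\chi\text{ trivial}}+\mathbbm{1}_{\chi\text{ cyclotomic}},
\]
and matching against the four-case local Euler--Poincar\'e formula for $\dim_{\FFpb} H^1(G_K,\FFpb(\chi))$ displayed just above the corollary confirms the equality $\sum_s d_s'=\dim_{\FFpb} H^1(G_K,\FFpb(\chi))$ in each of the four cases (both trivial and cyclotomic; trivial only; cyclotomic only; neither).

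There is essentially no obstacle once Proposition \ref{prop:dimcount} is granted; the only subtle point, and the one I would make sure to write out carefully, is the verification that the endpoints $\frac{jp}{p-1}$ shared between consecutive ranges in Proposition \ref{prop:dimcount} never support a nonzero $d_s'$, so that summing strict inequalities over $j$ reproduces the full strict range $(1,1+\frac{ep}{p-1})$ without double counting or omission.
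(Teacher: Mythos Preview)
Your proposal is correct and matches the paper's own argument, which simply states that the corollary follows immediately from Proposition~\ref{prop:dimcount} together with the local Euler--Poincar\'e dimension formula. Your explicit check that the shared endpoints $\frac{m}{p^f-1}=\frac{jp}{p-1}$ force $p\mid m$ (hence $d_s'=0$ there) is a useful detail the paper leaves implicit, but the approach is identical.
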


\begin{proof}[Proof of Theorem \ref{thm:ddr3.1}]
Let us write $d_s'$ again for the values of $d_s$ claimed in the second part of Theorem \ref{thm:ddr3.1}. By Corollary \ref{cor:dimcount} we know that $\sum_s d_s'=\sum_s d_s$, hence it suffices to consider only $s\in\RR$ such that $d_s'>0$ and prove $d_s'=d_s$ for these values of $s.$

Let us first treat the case $s=0.$ We already saw above that
\[
\Fil^{0}H^1(G_K,\FFpb(\chi))=\ker\left(H^1(G_K,\FFpb(\chi))\xrightarrow{\hspace{2mm}\mathrm{res}\hspace{2mm}}H^1(I_K,\FFpb(\chi))\right)
\]
and $\Fil^s=0$ for $s<0.$ We can use the inflation-restriction exact sequence to show that the kernel above is isomorphic to $H^1\left(G_K/I_K,\FFpb(\chi)^{I_K}\right).$ This space is clearly 1-dimensional over $\FFpb$ if $\chi$ is trivial and $0$-dimensional if $\chi$ is ramified. However, if $\chi$ is unramified and non-trivial, by identifying $G_K/I_K$ with $\hat{\ZZ}$, we may consider $\chi$ as a character $\hat{\ZZ}\to \FFpb^\times$. By continuity any representation of $\hat{\ZZ}$ is completely determined by the image of $1.$ An extension of $\FFpb$ by $\FFpb(\chi)$ is always split since the matrix corresponding to 1 has distinct eigenvalues over an algebraically closed field (as $\chi$ is non-trivial). Hence, $\dim_{\FFpb} H^1\left(G_K/I_K,\FFpb(\chi)^{I_K}\right)=1$ if $\chi$ is trivial and $0$ otherwise.

Assume that $1<s\le 1+\frac{ep}{p-1}$ and $m:=(s-1)(p^f-1).$ We define an extension $M/K$ by letting $M:=L(\pi)$ for $\pi$ a root of $x^{p^f-1}+\pi_K$ and $L/K$ an unramified extension of degree prime to $p$ such that $\restr{\chi}{G_M}$ is trivial. The extension $M$ is now a tamely ramified Galois extension of $K$ of ramification degree $p^f-1$ and $\Gal(M/K)$ has order prime to $p$. Combined with the inflation-restriction sequence
\[
0\to H^1(\Gal(M/K),\FFpb(\chi))\to H^1(G_K,\FFpb(\chi))\to H^1(G_M,\FFpb(\chi))^{\Gal(M/K)}
\]
this gives
\[
H^1(G_K,\FFpb(\chi))\cong H^1(G_M,\FFpb(\chi))^{\Gal(M/K)}=\Hom_{\Gal(M/K)}(G_M^{\mathrm{ab}},\FFpb(\chi)).
\]
The isomorphisms of local class field theory give an isomorphism
\[
G_M^{\mathrm{ab}}\otimes \FFp \cong M^\times/(M^\times)^p,
\]
and we obtain
\[
H^1(G_K,\FFpb(\chi))\cong \Hom_{\Gal(M/K)}(M^\times/(M^\times)^p,\FFpb(\chi)).
\]
Since $M/K$ is tamely ramified of ramification degree $p^f-1$, we know that Hasse-Herbrand function is given by $\psi_{M/K}(x)=(p^f-1)x$ for $x\ge 0$ and that $G_K^u\subset G_M$ for any $u>0.$ Using Proposition IV.15 \cite[p.~74]{ser79}, it follows that for any finite extension $M'/M$ and any $u>0$ we have 
\[
\Gal(M'/K)^u=\Gal(M'/M)_{\psi_{M'/M}((p^f-1)u)}=\Gal(M'/M)^{(p^f-1)u}.
\]
So $G_K^u=G_M^{(p^f-1)u}$ for $u>0.$ Moreover, the maps of local class field theory map $G_M^{(p^f-1)u}$ onto the unit group $1+\pi^{\ceil{(p^f-1)u}}\O_M$ by Corollary 3 to Theorem XV.1 in \cite[p.~228]{ser79}. If we denote the unit group $1+\pi^{i}\O_M$ by $U_i$ for any integer $i>0$, then it becomes apparent that a cocycle in $c\in H^1(G_K,\FFpb(\chi))$ has trivial restriction to $G_K^u$ for all $u>s-1$ (to $G_K^{s-1}$, resp.) if and only if the corresponding homomorphism $M^\times/(M^\times)^p\to\FFpb(\chi)$ factors through $M^\times/(M^\times)^pU_{m+1}$ (through $M^\times/(M^\times)^pU_{m}$, resp.), where we still let $m=(s-1)(p^f-1).$ Note that this is precisely the condition such that $c\in\Fil^s H^1(G_K,\FFpb(\chi))$ ($c\in\Fil^{<s} H^1(G_K,\FFpb(\chi)),$ resp.) and it follows that
\[
\gr^s\left(H^1(G_K,\FFpb(\chi))\right)\cong \Hom_{\Gal(M/K)}\left(\frac{U_m}{(U_m\cap (M^\times)^p) U_{m+1}},\FFpb(\chi)\right).
\]

First assume $1<s<1+\frac{ep}{p-1}$ and $p\nmid m:=(s-1)(p^f-1).$ This is equivalent to the requirement that $0<m<\frac{ep(p^f-1)}{(p-1)}.$ We claim that $U_m\cap (M^\times)^p\subset U_{m+1}.$ Let $x=1+u\pi^t$ for positive integer $t$ and a unit $u\in\O_M^\times$ and suppose $v_M(x^p-1)\ge m.$ But
\[
v_M(x^p-1)=v_M(pu\pi^t+\dots+u^p\pi^{pt})\ge \min(e(p^f-1)+t,pt)
\]
with equality unless $e(p^f-1)+t=pt.$ If $pt<e(p^f-1)+t$, then $m=pt$, which would contradict the condition $p\nmid m.$ On the other hand, if $e(p^f-1)+t\le pt$, then $t\ge \frac{e(p^f-1)}{p-1}$ gives $e(p^f-1)+t\ge \frac{ep(p^f-1)}{(p-1)}$ and, therefore, $m=e(p^f-1)+t$ would contradict the upper bound on $m.$ Hence, $v_M(x^p-1)\ge m+1$ as required. It follows from the claim that
\[
\gr^s\left(H^1(G_K,\FFpb(\chi))\right)\cong \Hom_{\Gal(M/K)}\left(U_m/U_{m+1},\FFpb(\chi)\right).
\]
We let $l$ denote the residue field of $L$. The action of $\sigma\in\Gal(M/K)$ on $U_m/U_{m+1}$ sends $1+x\pi^m\mapsto 1+\sigma(x)\overline{\omega}_\pi(\sigma)\pi^m.$ Therefore, we have a $\Gal(M/K)$-equivariant isomorphism
\[
\begin{tikzcd}[column sep=2.5cm]
l(\overline{\omega}_\pi^m) \ar[r, "x\xmapsto{\hspace{8mm}} 1+x\pi^m"'] & U_m/U_{m+1}.
\end{tikzcd}
\]
Let $S_i$ be the set of embeddings $l\to\FFpb$ which restrict to $\overline{\tau}_i$ over $k$. Then the map
\begin{align*}
l(\overline{\omega}_\pi^m)\otimes_{\FFp} \FFpb &\cong \bigoplus_{i=0}^{f-1}\left( \bigoplus_{\tau\in S_i} \FFpb(\omega_i^m)\right) \\
x\otimes 1&\mapsto (\tau(x))_\tau
\end{align*}
is a $\Gal(M/K)$-equivariant isomorphism, where the action of $\Gal(M/K)$ on $\bigoplus_{\tau\in S_i}\FFpb(\omega_i^m)$ is defined by $g\cdot ((x_\tau)_\tau)=(\omega_i^m(g)x_{\tau\circ g})_{\tau}.$ (This is the well-known isomorphism 
\begin{align*}
l\otimes_{\FFp}\FFpb&\cong \bigoplus_{\tau:l\hookrightarrow \FFpb} \FFpb ;\\
x\otimes 1 &\mapsto (\tau(x))_\tau,
\end{align*}
in which we keep track of the Galois action.) We note that 
\[
\bigoplus_{\tau\in S_i} \FFpb \cong \mathrm{Ind}_{\Gal(M/L)}^{\Gal(M/K)} \FFpb.
\]
The latter representation is just the regular representation of the quotient $\Gal(L/K)$. Since this group is finite abelian of order prime-to-$p$, its regular representation decomposes as a direct sum of all its 1-dimensional representations, i.e.
\[
\mathrm{Ind}_{\Gal(M/L)}^{\Gal(M/K)} \FFpb \cong \bigoplus_{\nu:\Gal(L/K)\to\FFpb^\times} \FFpb(\nu),
\] 
and hence we find that
\[
l(\overline{\omega}_\pi^m)\otimes_{\FFp} \FFpb \cong \bigoplus_{i=0}^{f-1}\left(\bigoplus_{\nu} \FFpb(\nu\omega_i^m) \right).
\]
It follows that the dimension of $\gr^s\left(H^1(G_K,\FFpb(\chi))\right)$ is equal to 
\[
\dim_{\FFpb} \Hom_{\Gal(M/K)}\left(\bigoplus_{i=0}^{f-1}\left(\bigoplus_{\nu} \FFpb(\nu\omega_i^m) \right),\FFpb(\chi)\right)
\]
and this dimension is equal to the number of $i$ such that $m\equiv n_i \bmod{p^f-1},$ which proves that $d_s'=d_s$ for all $1<s<1+\frac{ep}{p-1}$ for which $d_s'>0.$

Finally, assume $s=1+\frac{ep}{p-1},$ or, equivalently, assume $m=\frac{ep(p^f-1)}{p-1}.$ We may, furthermore, assume that $\chi$ is cyclotomic, since we are only considering cases in which $d_s'>0.$ Note that the requirement that $\restr{\chi}{G_M}$ is trivial implies that $\zeta_p\in M.$ To show $d_s=d_s'= 1$, by the earlier calculation, is equivalent to showing
\[
\Hom_{\Gal(M/K)}\left(\frac{U_m}{(U_m\cap (M^\times)^p) U_{m+1}},\FFpb(\chi)\right)
\]
is 1-dimensional. We will prove that the quotient of unit groups is isomorphic to $\FFp(\chi)$ as a module over $\FFp[\Gal(M/K)]$ from which the required result is immediate. Similarly to the calculation on valuations above, it is easy to show $U_m\cap (M^\times)^p=(U_{n})^p$ where $n=m/p.$ We can obtain the quotient $\frac{U_m}{(U_m\cap (M^\times)^p) U_{m+1}},$ therefore, as the cokernel of the $p$-power map
\[
\begin{tikzcd}[column sep=3cm]
U_n/U_{n+1} \arrow[r, "x\xmapsto{\hspace{1cm}}x^p"'] & U_{m}/U_{m+1}.
\end{tikzcd}
\]
A small calculation shows that this map sends $1+s\pi^n\mapsto 1+(s^p+cs)\pi^m,$ where $c\in\O_M^\times$ is defined by $p=c\pi^{e(p^f-1)}.$ Since $\zeta_p\in M$, we know that $M$ contains $N:=\QQp(\zeta_p)=\QQp(\sqrt[p-1]{-p}).$ Therefore, there exists a unit $u\in \O_M^\times$ such that $\pi^{e(p^f-1)}=(u\sqrt[p-1]{-p})^{p-1}=-u^{p-1}p$ giving $-c=(u^{-1})^{p-1}$. We conclude that $-c$ always has a $(p-1)$st root in the residue field and that the $p$-power map above has kernel and, thus, cokernel of size $p.$ Write $\overline{s}$ for the image of any $s\in \O_M^\times$ in the cokernel. The action of $\Gal(M/K)$ on this cokernel is given by $\overline{\omega}_\pi^m\otimes \mu_{-\overline{c}}$ where $\mu_\alpha:\Gal(M/K)\to \FFp^\times$ is the unramified character sending the (absolute) arithmetic Frobenius $\Frob_p$ to $\alpha\in \FFp^\times.$ For any uniformiser $\pi_N$ of $N,$ we can write $\zeta_p=v\pi_N+1$ for a unit $v\in\O_N^\times.$ Hence, one sees that for $\sigma\in \Gal(M/K)$ we have 
\[
\frac{\sigma(v\pi_N)}{v\pi_N}=\frac{(1+v\pi_N)^{\chi(\sigma)}-1}{v\pi_N}\equiv \chi(\sigma) \bmod{\pi}.
\]
Since $\sigma(v)/v\equiv 1\bmod{\pi},$ it follows that $\sigma(\pi_N)/\pi_N \equiv \chi(\sigma)\bmod \pi$ for any uniformiser $\pi_N$ of $N$, in particular $\pi_N=\sqrt[p-1]{-p}.$ Thence, if $\sigma\in\Gal(M/K)$ is a lift of $\Frob_p,$ then
\[
\overline{\omega}_\pi(\sigma)^n=\frac{\sigma(u\sqrt[p-1]{-p})}{u\sqrt[p-1]{-p}} \equiv \overline{u}^{p-1}\chi(\sigma)=(-\overline{c})^{-1}\chi(\sigma).
\]
In other words, we've just shown that $\overline{\omega}_\pi^n$ acts as $\chi\otimes\mu_{-\overline{c}^{-1}}$ on the cokernel and, therefore, $\overline{\omega}_\pi^{m}$ must act on the cokernel in the same way since $(p^f-1)\mid m-n.$ Thus, $\Gal(M/K)$ acts on the cokernel via the cyclotomic character and $U_m/(U_m\cap (M^\times)^p) U_{m+1}\cong \FFp(\chi),$ as required.
\end{proof}

\subsection{Constructing the basis elements}\label{sec:bas-constr}

In this subsection we will construct explicit basis elements of the space $H^1(G_K,\FFpb(\chi)),$ which we will later use to give an explicit version of Serre's conjecture. We will construct the basis by making certain choices and only at the end of the next section it will follow that our construction is independent of the choices made.

\subsubsection{The Artin-Hasse Exponential}

Before we move on to define a basis of $H^1(G_K,\FFpb(\chi))$ explicitly, we must first define a homomorphism $\varepsilon_\alpha$ in terms of which our basis will be defined. Recall that the \textbf{Artin-Hasse exponential} is given by
\[
E_p(x)=\exp\left(\sum_{n\ge 0} \frac{x^{p^n}}{p^n}\right).
\]
It's a well-known fact that $E_p(x)$ is an element of $\ZZp\llbracket x\rrbracket$ (see, for example, \cite[\S7.2.2]{rob00}), i.e. its coefficients are $p$-integral elements of $\QQ.$ Since $p$ is fixed throughout, we will write $E(x)$ instead of $E_p(x)$ from now on.

Suppose $l$ is a finite field of characteristic $p$ and $L$ is the field of fractions of the ring of Witt vectors $W(l)$ of $l$. Let $M$ be a subfield of $\mathbf{C}_p$ containing $L$ and let $\alpha\in M$ be such that $|\alpha|<1.$ We define a map
\begin{align}\label{eqn:ahexphom}
\begin{split}
\varepsilon_\alpha:l\otimes_{\FFp}\FFpb &\to \O_M^\times \otimes_{\ZZ} \FFpb \\ a\otimes b &\mapsto E([a]\alpha)\otimes b,
\end{split}
\end{align}
where $[a]$ is a Teichm\"uller lift of $a\in l.$ It follows from \cite[Lem.~4.1]{ddr16} that this map is a homomorphism which relates the additive structure of $l$ to the multiplicative structure of $\O_M^\times\otimes \FFp.$ We will use this homomorphism to construct an explicit basis for $H^1(G_K,\FFpb(\chi)).$

\subsubsection{An \texorpdfstring{$\FFpb$}{Fp-bar}-dual of \texorpdfstring{$H^1(G_K,\FFpb(\chi))$}{H1}}\label{subsec:fpb-dual}

We need a slightly more general set-up than in the previous subsection. Suppose $K/\QQp$ is a finite extension of ramification index $e,$ of residue degree $f$ and with residue field $k$. Moreover, we take $\chi:G_K\to\FFpb^\times$ to be any continuous character. Let $M=L(\pi)$ be a totally tamely ramified extension of an unramified prime-to-$p$-degree extension $L/K$, where the ramification degree $e_M$ of $M/K$ satisfies $e_M\mid p^f-1$ and the uniformiser $\pi$ of $M$ satisfies $\pi^{e_M}\in K^\times,$ and we take $M$ sufficiently large so that $\restr{\chi}{G_M}$ is trivial. Note that $e_M$ denotes the ramification degree of $M$ over $K$, and the ramification degree $e_{M/\QQp}$ of $M$ over $\QQp$ is given by $e_{M/\QQp}=e_M e.$ We recover the set-up of the previous subsection by taking $e_M=p^f-1$ and $\pi^{e_M}=-\pi_K.$ Note that the proof of Theorem \ref{thm:ddr3.1} follows in the same way as before, taking into account that now $s=1+\frac{m}{e_M}.$ In particular, for $1<s\le 1+\frac{ep}{p-1}$ we still have the isomorphism
\[
\gr^s\left(H^1(G_K,\FFpb(\chi))\right)\cong \Hom_{\Gal(M/K)}\left(\frac{U_m}{(U_m\cap (M^\times)^p) U_{m+1}},\FFpb(\chi)\right),
\]
which, if we furthermore require that $s<1+\frac{ep}{p-1}$ and $p\nmid m:=e_M(s-1)$, can again be shown to simplify to
\[
\gr^s\left(H^1(G_K,\FFpb(\chi))\right)\cong \Hom_{\Gal(M/K)}\left(U_m / U_{m+1},\FFpb(\chi)\right).
\]
More generally, for this choice of $M$ we certainly still have
\begin{align*}
H^1(G_K,\FFpb(\chi))&\cong \Hom_{\Gal(M/K)}\left(M^\times,\FFpb(\chi)\right) \\
&\cong \Hom_{\FFpb}\left(\left(M^\times\otimes_{\ZZ}\FFpb(\chi^{-1})\right)^{\Gal(M/K)},\FFpb\right).
\end{align*}
We will construct our basis of $H^1(G_K,\FFpb(\chi))$ as a dual basis to a basis for the space above, therefore it will be useful to give this space a name. Let
\[
U_\chi:=\left(M^\times\otimes_{\ZZ}\FFpb(\chi^{-1})\right)^{\Gal(M/K)}.
\]

\subsubsection{The basis elements \texorpdfstring{$u_{\alpha}$}{u-alpha}}\label{subsec:u-ij}

Letting $f'$ denote the absolute niveau of $\chi$ and $f'':=f/f',$ recall from Section~\ref{subsec:filjmp} that for any $0\le j<e$ the set of integers $m\in\ZZ$ satisfying
\begin{enumerate}
\item $\frac{jp}{p-1}<\frac{m}{p^f-1}<\frac{(j+1)p}{p-1}$,
\item $p\nmid m$ and 
\item there exists an $i$ such that $m\equiv n_i\bmod{(p^f-1)}$
\end{enumerate}
has cardinality $f'.$ Denote this set of integers by $W_j'$ and let $W':=\bigcup_{j=0}^{e-1} W_j',$ which is now a set of integers of cardinality $ef'.$ For any $m\in W'$, we get a uniquely defined $i_{m}\in\{0,\dotsc,f'-1\}$ via the association $m\equiv n_{i_{m}}\bmod{(p^f-1)}.$ (Note that there may be distinct $m_1\neq m_2\in W'$, possibly even lying in the same component $W_j'$, that satisfy $i_{m_1}=i_{m_2}.$) We define $W:=W'\times \{0,\dotsc,f''-1\}$ which has cardinality $ef$. To any $\alpha=(m,k)\in W$ (with $m\in W'$ and $0\le k <f''$) we can attach an embedding via $\tau_\alpha:=\tau_{i_m+kf'}.$ Notice that we also have the congruence $m\equiv n_{i_m+kf'}\bmod{(p^f-1)}$ for any $k\in \ZZ.$

Let us return to the situation where $M=L(\pi)$ for an unramified extension $L$ of $K$ of degree prime-to-$p$ and a uniformiser $\pi$ such that $\pi^{e_M}\in K^\times$ and $\restr{\chi}{G_M}$ is trivial. This implies that $\restr{\chi}{I_K}$ factors through $I_{M/K}$ which has cardinality $e_M$, so for any $i$ we see $(\restr{\chi}{I_K})^{e_M}=\omega_i^{n_ie_M}=\mathbf{1}.$ Any $n_i$, therefore, is divisible by $(p^f-1)/e_M.$ It follows that the set of integers $m'\in\ZZ$ satisfying the three conditions
\begin{enumerate}
\item $\frac{jp}{p-1}<\frac{m'}{e_M}<\frac{(j+1)p}{p-1}$,
\item $p\nmid m'$ and 
\item there exists an $i$ such that $m'\equiv \frac{e_M n_i}{p^f-1}\bmod{e_M}$
\end{enumerate}
is in bijection with the set $W_j'$ above by the explicit map $m\mapsto m':=\frac{e_M m}{p^f-1}.$ Unfortunately, this notation leads to the awkward situation that, in general, $m'\notin W',$ but we will stick to it because $m'$ is analogous to the integers $n_i'$ appearing in \cite{ddr16}. We will write $\omega_\pi$ for the character $G_K\to K^\times$ defined by $\sigma\mapsto \sigma(\pi)/\pi$ and $\overline{\omega}_\pi$ for its reduction mod $\pi_K$. We note that this character factors through $\Gal(M/K)$ and, since $\sigma(\pi)$ is a root of $x^{e_M}-\pi^{e_M}\in K[x],$ the image of $\omega_\pi$ is contained in $\mu_{e_M}(K),$ the $e_M$th roots of unity in $K^\times.$ To contrast this with our previous fundamental character, let us from now on write $\ovr{\omega}_{\mathrm{fc}}:G_K\to k^\times$ for the mod $p$ character defined by $\sigma\mapsto \sigma(\beta)/\beta$ for $\beta$ a root of $x^{p^f-1}+\pi_K.$ We have $\restr{\overline{\omega}_\pi}{I_K}=(\restr{\ovr{\omega}_{\mathrm{fc}}}{I_K})^{(p^f-1)/e_M},$ and, therefore, for all $0 \le i <f,$ we have
\[
\restr{\chi}{I_K}=(\tau_i\circ \overline{\omega}_\pi)|_{I_K}^{n_ie_M/(p^f-1)}.
\]
Writing $m':=m e_M/(p^f-1),$ we see that for any $\alpha=(m,k)\in W$ we have
\[
\chi=\mu(\tau_{\alpha}\circ \overline{\omega}_\pi)^{m'}
\]
for a unique unramified character $\mu:\Gal(L/K)\to \FFpb^\times$ independent of $\alpha.$

Recall that we have an isomorphism
\[
l\otimes_{\FFp}\FFpb \cong \bigoplus_{\tau :k\hookrightarrow\FFpb} l\otimes_{k,\tau}\FFpb.
\]
By the normal basis theorem $l$ is free of rank 1 over $k[\Gal(l/k)]$ and, thus, $l\otimes_{k,\tau}\FFpb$ is free of rank 1 over $\FFpb[\Gal(l/k)]$. The latter statement means that $l\otimes_{k,\tau}\FFpb$ is isomorphic to the regular representation of $\Gal(l/k)$ over $\FFpb$, which splits up in a direct sum over all characters by Maschke's Theorem. Hence, for each embedding $\tau,$ the $\mu$-eigenspace $\Lambda_{\tau,\mu}$ defined as
\[
\left\{a\otimes b\in l\otimes_{k,\tau}\FFpb\;\middle|\; g(a)\otimes b =(1\otimes\mu(g))a\otimes b\text{ for all }g\in\Gal(L/K)\right\}
\]
is 1-dimensional over $\FFpb.$ We let $\lambda_{\tau,\mu}$ be any non-zero element of $\Lambda_{\tau,\mu}$; note that these elements are linearly independent over $\FFpb$ for differing embeddings $\tau.$

Then, for any $\alpha=(m,k)\in W$, we define
\[
u_{\alpha}:=\varepsilon_{\pi^{m'}}(\lambda_{\tau_{\alpha},\mu})\in \O_M^\times\otimes_{\ZZ} \FFpb,
\]
where $\varepsilon_{\pi^{m'}}$ is defined by (\ref{eqn:ahexphom}), giving $ef$ elements $u_\alpha.$ Later we will prove that the elements $u_\alpha$ give the explicit basis of $U_\chi$ that we are pursuing. In order for this even to make sense, we will now show $u_\alpha\in U_\chi.$ For any $g\in\Gal(M/K),$ it follows from the observation $\omega_\pi(g)=[\overline{\omega}_\pi(g)]$ that 
\[
g\cdot E([a]\pi^{m'})=E\left([g(a)\overline{\omega}_\pi(g)^{m'}]\pi^{m'}\right)
\]
for any $a\in l.$ Thus, for any $\alpha=(m,k)\in W$,
\begin{align*}
g\cdot\varepsilon_{\pi^{m'}}(\lambda_{\tau_{\alpha},\mu})&=\varepsilon_{\pi^{m'}}((\overline{\omega}_\pi(g)^{m'}\otimes 1)g(\lambda_{\tau_{\alpha},\mu})) \\
&=\varepsilon_{\pi^{m'}}((\overline{\omega}_\pi(g)^{m'}\otimes \mu(g))\lambda_{\tau_{\alpha},\mu}).
\end{align*}
Since $\overline{\omega}_\pi(g)^{m'}\otimes \mu(g)=1\otimes\mu(g)(\tau_{\alpha}\circ\overline{\omega}_\pi(g))^{m'}=1\otimes \chi(g)$ in $l\otimes_{k,\tau_{\alpha}}\FFpb$, we see that $g u_{\alpha}=(1\otimes\chi(g))u_{\alpha}$ for all $g\in\Gal(M/K).$ Therefore, we can view $u_{\alpha}$ as an element of $U_\chi.$

\subsubsection{The definitions of \texorpdfstring{$u_{\mathrm{triv}}$}{u-triv} and \texorpdfstring{$u_{\mathrm{cyc}}$}{u-cyc}}

The definition of $u_{\mathrm{triv}}$ is straightforward. For $g\in\Gal(M/K),$ it is clear that $g(\pi)=\omega_\pi(g)\pi.$ As noted above, the image of $\omega_\pi$ is contained in $\mu_{e_M}(K).$ Since $p$ is coprime to $e_M$, the $p$-th power map gives an isomorphism $\mu_{e_M}(K)\xrightarrow{\hspace{2mm}\sim\hspace{2mm}}\mu_{e_M}(K).$ Thus, in fact, $\omega_\pi(g)\in (M^\times)^p$ and we set
\[
u_{\mathrm{triv}}:=\pi\otimes 1\in M^\times \otimes_{\FFp}\FFpb,
\]
which is an element of $U_\chi$ when $\chi$ is the trivial character.

Suppose $\chi$ is cyclotomic. Recall from the last part of the proof of Theorem \ref{thm:ddr3.1} that we have already proved explicitly that, for $m=\frac{epe_M}{p-1},$ we have
\[
\frac{U_m}{(U_m\cap (M^\times)^p)U_{m+1}}\cong \FFp(\chi).
\]
However, recall that the $p$-adic exponential converges on values of $y\in M$ such that $v_M(y)>e_Me/(p-1).$ For $1+x\in U_{m+1},$ we have $v_M(\log(1+x))> m$ and, thus, we see that $\exp(\log(1+x)/p)$ converges to a $p$-th root of $1+x.$ Therefore, $U_{m+1}\subset (M^\times)^p$ and also $U_{m+1}\subset (U_m\cap (M^\times)^p).$ Thus, we have an injection of the quotient above into $\O_M^\times \otimes_{\ZZ} \FFp.$

We extend the injection by scalars to $\FFpb$ and define $u_{\mathrm{cyc}}$ to be any non-trivial element of
\[
u_{\mathrm{cyc}}\in \mathrm{Im}\left(U_m \otimes_{\ZZ} \FFpb \xrightarrow{\hspace{5mm}} \O_M^\times \otimes_{\ZZ} \FFpb\right).
\]
It is now obvious from the isomorphism with $\FFp(\chi)$ that for any element $g\in\Gal(M/K)$ we have that
\[
g(u_{\mathrm{cyc}})=(\chi(g)\otimes 1)u_{\mathrm{cyc}}=(1\otimes \chi(g))u_{\mathrm{cyc}},
\]
hence, indeed, $u_{\mathrm{cyc}}\in U_\chi.$

\subsection{An explicit basis of \texorpdfstring{$H^1(G_K,\FFpb(\chi))$}{H1}}\label{sec:expl-basis}

In this section we will prove that the elements as constructed in the previous section give an explicit basis of $H^1(G_K,\FFpb(\chi))$. We will do this by proving the following theorem.

\begin{thm}
Let $B$ denote the elements $u_{\alpha}$ for $\alpha \in W$ with, additionally, the elements $u_{\mathrm{triv}}$ if $\chi$ is trivial and $u_{\mathrm{cyc}}$ if $\chi$ is cyclotomic. Then $B$ is an $\FFpb$-basis of $U_\chi.$
\end{thm}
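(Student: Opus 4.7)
The plan is to prove the theorem in two steps: a cardinality count, then linear independence via the filtration from \S\ref{sec:filtration}. For the count, the Euler--Poincar\'e formula recalled before Corollary~\ref{cor:dimcount}, together with the duality $H^1(G_K,\FFpb(\chi))\cong\Hom_{\FFpb}(U_\chi,\FFpb)$ of \S\ref{subsec:fpb-dual}, gives $\dim_{\FFpb}U_\chi = ef$ plus $1$ for each of the properties ``$\chi$ trivial'' and ``$\chi$ cyclotomic'' that holds. Since $|W|=|W'|\cdot f''=ef'\cdot f''=ef$, the set $B$ has exactly the right cardinality, so it suffices to prove linear independence.

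For linear independence of the $u_\alpha$, I would use the filtration on $U_\chi$ induced by the higher unit groups $U_i\otimes\FFpb$. For $\alpha=(m,k)\in W$ and $m':=me_M/(p^f-1)$, the Artin--Hasse identity $E(x)=1+x+O(x^2)$ gives
\[
u_\alpha = \varepsilon_{\pi^{m'}}(\lambda_{\tau_\alpha,\mu}) \equiv 1 + (\lambda_{\tau_\alpha,\mu}\otimes 1)\pi^{m'} \pmod{U_{m'+1}\otimes\FFpb},
\]
so $u_\alpha\in U_{m'}\otimes\FFpb$ has non-zero image in the graded piece $(U_{m'}/U_{m'+1})\otimes\FFpb$. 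Recalling from the proof of Theorem~\ref{thm:ddr3.1} the Galois-equivariant decomposition
\[
(U_{m'}/U_{m'+1})\otimes\FFpb \;\cong\; l(\overline{\omega}_\pi^{m'})\otimes_{\FFp}\FFpb \;\cong\; \bigoplus_{i=0}^{f-1}\bigoplus_{\nu}\FFpb(\nu\omega_i^{m'}),
\]
the leading coefficients $\lambda_{\tau,\mu}$ for distinct embeddings $\tau$ lie in distinct summands. For fixed $m\in W'$, as $k$ varies over $\{0,\dots,f''-1\}$ the embeddings $\tau_\alpha=\tau_{i_m+kf'}$ are precisely the $f''$ embeddings whose associated $n_i$ satisfies $n_i\equiv m\pmod{p^f-1}$; by Remark~\ref{rem:dim-is-f/f'} these index exactly the summands in the $\chi$-isotypic component, which has dimension $f''$. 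Hence the $f''$ vectors $u_{(m,k)}$ are linearly independent leading terms that fill the eigenspace, and since different $m$ yield different filtration jumps, the whole family $\{u_\alpha:\alpha\in W\}$ is linearly independent in $U_\chi$.

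The two boundary elements are handled separately. If $\chi$ is trivial, $u_{\mathrm{triv}}=\pi\otimes 1$ has non-trivial image in $M^\times/\O_M^\times\otimes\FFpb\cong\FFpb$ while every $u_\alpha$ is a unit; so $u_{\mathrm{triv}}$ is independent of the $u_\alpha$. If $\chi$ is cyclotomic, the defining inequalities of $W'$ force $m'<epe_M/(p-1)$ for every $\alpha$, whereas $u_{\mathrm{cyc}}$ projects non-trivially to the top graded piece $\gr^{1+ep/(p-1)}$, so it too is independent of all $u_\alpha$.

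The main obstacle is careful tracking of the leading-term computation combined with Galois-equivariance: one must verify both that the first-order truncation of $E([a]\pi^{m'})$ really is $1+(a\otimes 1)\pi^{m'}$ modulo $U_{m'+1}$, and that, under the identification of $(U_{m'}/U_{m'+1})\otimes\FFpb$ with the decomposition above, the $\mu$-eigenvector $\lambda_{\tau_\alpha,\mu}$ lands in exactly the right $\chi$-isotypic summand for each $\alpha$. Once this bookkeeping is nailed down, the counting results of Proposition~\ref{prop:dimcount} and Remark~\ref{rem:dim-is-f/f'} match the number of basis vectors at each level precisely, forcing $B$ to be an $\FFpb$-basis of $U_\chi$.
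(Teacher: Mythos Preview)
Your proposal is correct and follows essentially the same route as the paper: a cardinality count reducing to linear independence, then a filtration of $U_\chi$ by images of the higher unit groups, the leading-term computation $E(x)\equiv 1+x\pmod{x^2}$ to place each $u_\alpha$ in the correct graded piece, and linear independence of the $\lambda_{\tau_\alpha,\mu}$ across the summands of $l\otimes_{\FFp}\FFpb$. The paper phrases the final step as ``each graded piece is spanned by a subset of $B$'' rather than ``the images are independent,'' and is slightly more explicit about why $\gr^m U_\chi\cong (U_m/U_{m+1}\otimes\FFpb(\chi^{-1}))^{\Gal(M/K)}$ (invoking $(M^\times)^p\cap U_m\subset U_{m+1}$ and exactness of $\Gal(M/K)$-invariants), but these are the same argument.
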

\begin{proof} To prove this theorem, we will define a decreasing filtration on $U_\chi,$ which is dual to the increasing filtration on $H^1(G_K,\FFpb(\chi))$ defined earlier. Let $\Fil^0U_\chi=U_\chi$ and let $\Fil^m U_\chi$ denote the image of the map
\[
\begin{tikzcd}
(U_m\otimes \FFpb(\chi^{-1}))^{\Gal(M/K)} \arrow{r} & (M^\times \otimes \FFpb(\chi^{-1}))^{\Gal(M/K)}=:U_\chi
\end{tikzcd}
\]
for $m\ge 1.$ Let $\gr^mU_\chi:=\Fil^mU_\chi/\Fil^{m+1} U_\chi$ as usual. It is immediate by duality that $\dim_{\FFpb}U_\chi=\dim_{\FFpb}H^1(G_K,\FFpb(\chi)).$ More specifically, whenever $0<m<\frac{pee_M}{p-1}$ and $p\nmid m$, it follows as in the proof of Theorem \ref{thm:ddr3.1} and the discussion in \S\ref{subsec:fpb-dual} that
\[
\gr^s\left(H^1(G_K,\FFpb(\chi))\right)\cong \Hom_{\Gal(M/K)}\left(U_m/U_{m+1},\FFpb(\chi)\right),
\]
where $s:=1+\frac{m}{e_M}.$ We claim that 
\[
\gr^m U_\chi \cong \left(U_m/U_{m+1} \otimes_{\FFp} \FFpb(\chi^{-1})\right)^{\Gal(M/K)}.
\]
In the proof of Theorem \ref{thm:ddr3.1} we show that under these assumptions on $m$ we have that $(M^\times)^p \cap U_m \subset U_{m+1}$. Letting $\ovr{U}_i$ denote the image of $U_i$ in $M^\times \otimes \FFp$, we obtain an exact sequence of $\FFp$-modules
\[
\begin{tikzcd}
1\ar{r} & \ovr{U}_{m+1}\ar{r} & \ovr{U}_m\ar{r} & U_m/U_{m+1}\ar{r} & 1.
\end{tikzcd}
\]
The claim for these $m$ now follows by noting that 
\[
\left(- \otimes_{\FFp} \FFpb(\chi^{-1})\right)^{\Gal(M/K)}
\]
is exact since $\Gal(M/K)$ has order prime-to-$p$. In particular, it follows from this claim that $\dim_{\FFpb} \gr^m U_\chi$ equals $d_s$ for $1<s:=1+\frac{m}{e_M}<1+\frac{pe}{p-1}$ and $d_s$ as in Theorem \ref{thm:ddr3.1}.

Since the size of $B$ is equal to the $\FFpb$-dimension of $U_\chi$, it will suffice to prove that each graded is spanned by a subset of elements of $B$. By the dimensions of Theorem \ref{thm:ddr3.1}, we only need to consider integers $m$ such that $d_s>0.$

The rest of the proof follows from our constructions. If $m=\frac{epe_M}{p-1},$ then it suffices to note $u_\mathrm{cyc}$ is a non-trivial element of $\gr^m U_\chi=\Fil^m U_\chi,$ which is 1-dimensional.

If $0<m<\frac{epe_M}{p-1}$, then we may assume that $m=\frac{e_M \widetilde{m}}{p^f-1}=:\widetilde{m}'$ for some $\widetilde{m}\in W'$ as defined in \S\ref{subsec:u-ij}. The elements
\[
u_{\alpha}:=\varepsilon_{\pi^{\widetilde{m}'}}(\lambda_{\tau_{\alpha},\mu})
\]
for $\alpha=(\widetilde{m},k)\in W$ with $0\le k < f''$ are all non-trivial elements of $\gr^m U_\chi$ -- this follows, for example, since $E(x)\in 1+x+x^2\ZZp\llbracket x \rrbracket.$ It suffices to prove that these elements are linearly independent in $\gr^m U_\chi.$ Since $p\nmid m$, we showed that 
\[
\gr^m U_\chi \cong \left(U_m/U_{m+1}\otimes \FFpb(\chi^{-1})\right)^{\Gal(M/K)}.
\]
Since the map
\begin{align*}
l\otimes_{\FFp} \FFpb&\to U_m/U_{m+1}\otimes_{\FFp} \FFpb \\
a\otimes b &\mapsto (1+[a]\pi^m)\otimes b
\end{align*}
induced by $\varepsilon_{\pi^m}$ is an isomorphism and the elements 
\[
\{\lambda_{\tau_{\alpha},\mu} \mid \text{for }\alpha=(\widetilde{m},k)\text{ with }0\le k<f''\}
\]
are $\FFpb$-linearly independent in $l\otimes_{\FFp} \FFpb$, we see that their images are as well (recall that we defined $\tau_\alpha:=\tau_{i_{\widetilde{m}}+kf'}$, where $\widetilde{m}\equiv n_{i_{\widetilde{m}}}\bmod{(p^f-1)}$).

Lastly, if $m=0$, it suffices to note that $u_\mathrm{triv}=\pi\otimes 1$ does not lie in $\Fil^1U_\chi.$ 
\end{proof}

Given the basis $B$ above we obtain the dual basis $c_{\alpha}$ for $\alpha \in W$ with, additionally, $c_\mathrm{ur}$ and $c_\mathrm{tr}$ if $\chi$ is trivial or cyclotomic. This dual basis must, simply by duality, form a basis of $H^1(G_K,\FFpb(\chi))$ -- the subscripts refer to the unramified and tr\`es ramifi\'ee part of $H^1(G_K,\FFpb(\chi))$.

\begin{cor}\label{cor:defn-basis}
The set consisting of the classes $c_{\alpha}$ for $\alpha \in W$ with, additionally, $c_\mathrm{ur}$ if $\chi$ is trivial and $c_\mathrm{tr}$ if $\chi$ is cyclotomic forms a basis of $H^1(G_K,\FFpb(\chi))$.
\end{cor}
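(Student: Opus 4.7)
The corollary is essentially a direct consequence of the theorem together with the duality set up in \S\ref{subsec:fpb-dual}, so the plan is short.

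The plan is to unwind the identification
\[
H^1(G_K,\FFpb(\chi))\cong \Hom_{\FFpb}(U_\chi,\FFpb)
\]
recorded at the end of \S\ref{subsec:fpb-dual}. Since $M/K$ is Galois with $\Gal(M/K)$ of order prime to $p$ and $\restr{\chi}{G_M}$ trivial, the inflation-restriction sequence combined with local class field theory identifies $H^1(G_K,\FFpb(\chi))$ with $\Hom_{\Gal(M/K)}(M^\times,\FFpb(\chi))$, and taking $\FFpb$-linear duals of $\Gal(M/K)$-invariants (which is exact since the group order is prime to $p$) exhibits $H^1(G_K,\FFpb(\chi))$ as the $\FFpb$-linear dual of $U_\chi$.

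Given this, the proof is pure linear algebra: the preceding theorem shows that $B$ is a finite $\FFpb$-basis of $U_\chi$, so taking the dual basis in $\Hom_{\FFpb}(U_\chi,\FFpb)$ and transporting through the isomorphism above yields the desired basis of $H^1(G_K,\FFpb(\chi))$. I would simply define $c_\alpha$, $c_{\mathrm{ur}}$ and $c_{\mathrm{tr}}$ as the unique classes dual to $u_\alpha$, $u_{\mathrm{triv}}$ and $u_{\mathrm{cyc}}$, respectively.

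The only substantive point, which is not strictly needed for the statement but is worth noting for the labels, is to justify the subscripts. For $c_{\mathrm{ur}}$, the element $u_{\mathrm{triv}}=\pi\otimes 1$ lies outside $\Fil^1 U_\chi$ (as observed at the end of the preceding proof), so its dual class pairs nontrivially only with the lowest filtered piece, which by the computation $\Fil^0 H^1=\Fil^1 H^1=\ker(H^1(G_K,\FFpb)\to H^1(I_K,\FFpb))$ of \S\ref{sec:filtration} is precisely the unramified line; hence $c_{\mathrm{ur}}$ is unramified. For $c_{\mathrm{tr}}$, the element $u_{\mathrm{cyc}}$ generates the top graded piece $\gr^{epe_M/(p-1)} U_\chi$, dual to the $s=1+\frac{ep}{p-1}$ graded piece of $H^1(G_K,\FFpb(\chi))$, which is the très ramifiée line. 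No step here is a genuine obstacle; the content of the corollary is the theorem, and everything else is formal duality.
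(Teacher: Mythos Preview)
Your proposal is correct and follows exactly the paper's approach: the paper records the identification $H^1(G_K,\FFpb(\chi))\cong \Hom_{\FFpb}(U_\chi,\FFpb)$ in \S\ref{subsec:fpb-dual} and then, immediately before the corollary, simply says that the dual of the basis $B$ ``must, simply by duality, form a basis of $H^1(G_K,\FFpb(\chi))$''. Your write-up is in fact more detailed than the paper's one-line justification, and your remarks on the subscripts match the paper's brief parenthetical that they ``refer to the unramified and tr\`es ramifi\'ee part''.
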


Once again we emphasise that the explicit basis as defined here may depend on the choice of extension $M$ and the choice of uniformiser $\pi$ of $M$ in \S\ref{subsec:fpb-dual}.

\section{Explicit Serre weights over ramified bases}\label{chap:proof}

In this section we use the explicit basis obtained in the previous section to give an explicit definition of the distinguished subspaces $L_V$ defined in \S\ref{sec:background-notation}: we will define them as a subspace spanned by a subset of our explicit basis elements. In Theorem \ref{thm:L_V=L_V^AH} we will prove the equivalence of our subspaces spanned by explicit basis elements with the distinguished subspaces $L_V$, thereby providing all the necessary theoretical foundation to give a new explicit reformulation of the conjectures on the modularity of mod $p$ Galois representations of totally real number fields. Results of \ref{sec:psBT-redns}, \ref{sec:restr_to_G_inf} and \ref{sec:expl_basis_L_V} can be seen as generalising results of \cite[\S3]{cegm17}.

\subsection{Reductions of pseudo-Barsotti Tate representations}\label{sec:psBT-redns}

One of the essential ingredients in our proof of the equivalence of the two formulations is the classification in \cite{gls15} of representations that arise as reductions of pseudo-Barsotti--Tate representations. Since we will heavily use their results and their notation, we will quickly recall their most important results adapted to the case at hand. We note that in \cite{gls15} only the case $p>2$ is covered. We will use Wang's extension \cite{wan17} of their results to $p=2$ for the remaining case.

\subsubsection{Field of norms and \'etale $\varphi$-modules}
Let us very briefly recall the theory of the field of norms and \'etale $\varphi$-modules. We are given a local field $K$ and a fixed uniformiser $\pi_K.$ We emphasise that our choices of uniformiser in this section and in the previous sections must be compatible. We define a compatible system of $p^n$th roots of our uniformiser as follows: let $\pi_0=-\pi_K$. We define $\pi_n$ inductively for any $n>0$ as a $p$th root of $\pi_{n-1}$, that is, satisfying $\pi_n^p=\pi_{n-1}$. Let $K_n:=K(\pi_n)$ and $K_\infty=\bigcup_{n=0}^\infty K_n.$ If $p=2$, then we assume that our uniformiser $\pi_K$ was chosen to satisfy \cite[Lem.~2.1]{wan17}; that is, we assume $K_\infty\cap K_{p^\infty}=K,$ where we define $K_{p^\infty}:=\bigcup_{n\ge 1} K(\zeta_{p^n})$ with $\zeta_{p^n}$ a primitive $p^n$-th root of unity. The theory of the field of norms lets us identify
\begin{align*}
k((u)) &\xrightarrow{\hspace{3mm}\sim\hspace{3mm}} \varprojlim_{N_{K_{n+1}/K_n}} K_n \\
u &\xmapsto{\hspace{9mm}} (\pi_n)_n,
\end{align*}
where the transition maps in the inverse limit are the norm maps. This construction extends to extensions of these fields and we get a natural isomorphism of absolute Galois groups $G_{k((u))}=G_{K_\infty}.$ On the other hand there exists an equivalence of abelian categories between the category of finite-dimensional $\FFpb$-representations of $G_{k((u))}$ and the category of \textbf{\'etale $\varphi$-modules}. These are finite $k((u))\otimes_{\FFp}\FFpb$-modules $\mathcal{M}$ equipped with a $\varphi$-semilinear map $\varphi\colon\mathcal{M}\to\mathcal{M}$ such that the induced $k((u))\otimes_{\FFp}\FFpb$-linear map $\varphi^*\mathcal{M}\to\mathcal{M}$ is an isomorphism. The functor $T$ from \'etale $\varphi$-modules to representations of $G_{k((u))}$ is given by
\[
T\colon\mathcal{M}\to \left(k((u))^\mathrm{sep}\otimes_{k((u))} \mathcal{M}\right)^{\varphi=1}.
\]
The isomorphism $k\otimes_{\FFp} \FFpb\to\prod_{i=0}^{f-1} \FFpb$ gives a decomposition $\mathcal{M}=\prod_i \mathcal{M}_i,$ where $\varphi$ now induces $\FFpb$-linear morphisms $\varphi\colon\mathcal{M}_{i-1}\to\mathcal{M}_i.$ We will make use of this theory by describing finite-dimensional $\FFpb$-representations of $G_{K_\infty}$ via \'etale $\varphi$-modules.

\subsubsection{The structure theorem of Gee--Liu--Savitt}\label{subsec:struc-thm-gls}
From now on let us fix a Serre weight $V=V_{\underline{\eta},\underline{0}}$ and a pair of characters $\chi_1,\chi_2:G_K\to \FFpb^\times$ as in \S\ref{sec:background-notation}. (Recall that we were free to assume that $V$ is of the form $V_{\underline{\eta},\underline{0}}$, since we can obtain the case of a general Serre weight from this one by twisting.) We will write $L_V$ for $L_{V_{\underline{\eta},\underline{0}}}(\chi_1,\chi_2)$ and we will assume this is non-empty. Recall that we defined the integers $r_i\in[1,p]$ via $r_i:=\eta_i+1$ and we write $\chi:=\chi_1\chi_2^{-1}.$ For any $0\le i<f$, we may decompose $\chi$ as a power of $\omega_i$ and an unramified character $\mu$ independent of $i.$ Then $\mu$ factors as a character $\mu:\Gal(L/K)\to\FFpb^\times$ for some finite unramified extension $L$ of $K$ of prime-to-$p$ order.

First we need to get rid of an exceptional pathological case. Later we will define the subspaces $L_V^{\mathrm{AH}}$ using our constructed basis and we will claim $L^{\mathrm{AH}}_V=L_V$ as defined above. Before we continue let us define this space in one exceptional case.

\begin{defn}\label{defn:exceptional-L_V^AH}
If $\chi$ is cyclotomic, $\chi_2$ is unramified and $r_i=p$ for all $0\le i<f,$ then we define
\[
L_V^{\mathrm{AH}}(\chi_1,\chi_2):=H^1(G_K,\FFpb(\chi)).
\]
\end{defn}
Note that it follows from the last part of the proof \cite[Thm.~6.1.8]{gls15} that if $\chi$ is cyclotomic, $\chi_2$ is unramified and $r_i=p$ for all $i$, then 
\[
L_V(\chi_1,\chi_2)=H^1(G_K,\FFpb(\chi)),
\]
so in this exceptional case we indeed have that $L_V^\mathrm{AH}(\chi_1,\chi_2)=L_V(\chi_1,\chi_2).$

We will from now on assume that if $\chi$ is cyclotomic and $\chi_2$ is unramified, then $r_i<p$ for at least one $i.$ This assumption has the consequence that the restriction map
\[
\mathrm{res}:H^1(G_K,\FFpb(\chi))\to H^1(G_{K_\infty},\FFpb(\chi))
\]
becomes injective when restricted to $L_V$ -- this is \cite[Lem.~5.4.2]{gls15} for $\chi$ non-cyclotomic and follows from the proof of \cite[Thm.~6.1.8]{gls15} when $\chi$ is cyclotomic. (Note that it follows from Wang \cite{wan17} that these results also work for $p=2$.) Alternatively, the injectivity for $\chi$ cyclotomic follows from the fact that under the assumption $L_V$ is contained in the \emph{peu ramifi\'e subspace} of $H^1(G_K,\FFpb(\chi))$ by \cite[Thm.~4.1.1]{ste20}. By \cite[Lem.~5.4.2]{gls15} the kernel of the restriction map is the tr\`es ramifi\'ee line spanned by the fixed uniformiser $-\pi_K$. Therefore, the restriction map is injective on this subspace. Since we are excluding the case above, we are free to restrict our representations to $G_{K_\infty}$ from now on and talk about the corresponding \'etale $\varphi$-modules.

\subsubsection{The integers \texorpdfstring{$t_i,s_i$}{t-i,s-i}}\label{subsubsec:defn-t-i-s-i} Before we state the main theorem of \cite{gls15}, we need to introduce just a little more notation: we need to introduce the integers $t_i,s_i$ corresponding to the maximal and minimal Kisin modules of \cite[\S5.3]{gls15}. We will take an ad hoc approach. Write $\chi_2|_{I_K}=\prod_i \omega_i^{m_i}$ for the uniquely determined integers $m_i\in[0,p-1]$ with not all $m_i$ equal to $p-1.$ Now let $\mathcal{S}$ be the set of $f$-tuples of non-negative integers $(a_0,\dotsc,a_{f-1})$ satisfying $\chi_2|_{I_K}=\prod_i \omega_i^{a_i}$ and $a_i\in[0,e-1]\cup[r_i,r_i+e-1]$ for all $i.$ This set is non-empty by our assumption that $L_V$ is non-empty. If we write $v_i$ for the $f$-tuple $(0,\dotsc,-1,p,\dotsc,0)$ with the $-1$ in the $i$th position and $v_{f-1}$ for $(p,0,\dotsc,0,-1)$, then it is proved in \cite[Lem.~5.3.1]{gls15} that there exists a subset $J\subseteq\{0,\dotsc,f-1\},$ possibly empty, such that
\[
(m_0,\dotsc,m_{f-1})+\sum_{i\in J} v_i \in \mathcal{S}
\]
and we can choose $J=J_\mathrm{min}$ minimal in the sense that it is contained in any other subset satisfying this requirement. Then we define
\[
(t_0,\dotsc,t_{f-1}):=(m_0,\dotsc,m_{f-1})+\sum_{i\in J_\mathrm{min}} v_i \in \mathcal{S}.
\]
We will define $s_i:=(r_i+e-1)-t_i$ for all $i.$ Note that it follows from our construction that $s_i,t_i\in[0,e-1]\cup[r_i,r_i+e-1]$ and $s_i+t_i=r_i+e-1$ for all $i.$ 

\subsubsection{The main structure theorem of Gee--Liu--Savitt} We will understand $(a)_i$ to mean the following: $(a)_i=a$ if $i\equiv 0 \bmod f$ and $(a)_i=1$ otherwise. Now we are ready to state the main structure theorem of \cite{gls15}.

\begin{thm}\label{thm:gls}
The image of the subspace $L_V$ of $H^1(G_K,\FFpb(\chi))$ after restriction to $H^1(G_{K_\infty},\FFpb(\chi))$ consists precisely of those classes that can be represented by \'etale $\varphi$-modules $\mathcal{M}$ of the following form: we can choose bases $e_i,f_i$ of $\mathcal{M}_i$ for which $\varphi$ has the form
\begin{align*}
\varphi(e_{i-1}) &= u^{t_i}e_i; \\
\varphi(f_{i-1}) &= (a)_i u^{s_i}f_i+y_ie_i,
\end{align*}
where $a=\mu(\Frob_K)$ and $y_i\in \FFpb\llbracket u\rrbracket$ is a polynomial of which the degrees of the non-zero terms lie in the interval

\begin{itemize}
\item $[0,s_i-1]$ if $t_i\ge r_i$ except, possibly, if $\chi$ is trivial;
\item $\{t_i\}\cup [r_i,s_i-1]$ if $t_i<r_i$ except, possibly, if $\chi$ is trivial.
\item If $\chi$ is trivial, we make any single choice of $i_0\in[0,\dotsc,f-1]$ and for $i_0$ in addition to the intervals defined above we also allow terms of degree
\[
\left\{s_{i_0}+\frac{1}{p^f-1}\sum_{k=1}^{f} p^{f-k}(s_{{i_0}+k}-t_{{i_0}+k}) \right\}.
\]
\end{itemize}
In every case the polynomials $y_i$ are uniquely determined by $\mathcal{M}$ and every collection of polynomials subject to the conditions above corresponds to a class in $L_V.$
\end{thm}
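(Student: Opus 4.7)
The plan is to deduce this statement from the Gee--Liu--Savitt classification of Kisin modules associated to reductions of pseudo-Barsotti--Tate representations (extended to $p=2$ by Wang \cite{wan17}). The starting point is \cite[\S5.3]{gls15}: any class in $L_V$ is represented by a Kisin module whose diagonal pieces are governed by the integers $s_i, t_i$ defined in \S\ref{subsubsec:defn-t-i-s-i}. Inverting $u$ to pass from Kisin modules to étale $\varphi$-modules then produces bases $e_i, f_i$ of $\mathcal{M}_i$ in which $\varphi$ takes the stated upper-triangular form, with the unramified scalar $a = \mu(\Frob_K)$ appearing via the notation $(a)_i$ on the crossing $i \equiv 0 \bmod f$ (since $\chi_2$, which determines the $t_i$ and hence the $e_i$-component, has an unramified-by-tame factorisation whose unramified part is carried by $\mu^{-1}$).

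With this diagonal skeleton fixed, I would next determine the possible off-diagonal polynomials $y_i$. The key tool is a change-of-basis argument: replacing $f_i \mapsto f_i + z_i e_i$ for $z_i \in \FFpb\llbracket u\rrbracket$ alters the pair $(y_i)$ by terms of the shape $\varphi(z_{i-1})u^{s_i} - (a)_i u^{t_i} z_i$, suitably indexed. Using such changes of basis one kills every monomial of degree $\ge s_i$, giving the upper bound $\deg y_i \le s_i - 1$ and simultaneously the claimed uniqueness of the $y_i$. When $t_i \ge r_i$ this absorption kills nothing below $s_i$ and all degrees in $[0, s_i - 1]$ remain. When $t_i < r_i$, the additional requirement that the étale $\varphi$-module comes from a genuine Kisin submodule with the correct $u$-divisibility (so that the corresponding lattice has Hodge type $(\underline{\eta}, \underline{0})$) forces $y_i$ to have no nonzero term of degree in the window $(t_i, r_i)$, leaving exactly the set $\{t_i\} \cup [r_i, s_i - 1]$.

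The last ingredient is the exceptional case where $\chi$ is trivial. In that situation $H^1(G_K, \FFpb(\chi))$ contains an unramified line $c_\mathrm{ur}$ that is not captured by the generic Kisin module recipe above (which only produces classes with nontrivial restriction to $G_K^u$ for some $u > 0$). One has to enlarge the moduli by allowing an extra degree of $y_{i_0}$ at a single index $i_0$ whose value is dictated by the condition that the resulting étale $\varphi$-module, under the functor $T$, yields a representation that becomes trivial on $G_{K_\infty}$ but has nontrivial image on $G_K/G_{K_\infty}$ acting via $\Frob_K$. A direct calculation comparing the $\varphi$-eigenvalues at the two basis vectors produces exactly the rational exponent
\[
s_{i_0} + \frac{1}{p^f-1}\sum_{k=1}^{f} p^{f-k}(s_{i_0+k} - t_{i_0+k}),
\]
as a well-defined integer since $p^f - 1$ divides $\sum_k p^{f-k}(s_{i_0+k} - t_{i_0+k})$ modulo the relation $s_j + t_j = r_j + e - 1$.

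The main obstacle will be the bookkeeping for the change-of-basis step: verifying that the normalisation to the stated upper-triangular form is possible, that the forbidden window $(t_i, r_i)$ really is forced by the Kisin-module-level constraints and not merely preferred, and that the uniqueness claim survives the passage from Kisin modules (where uniqueness up to isomorphism is subtle) to étale $\varphi$-modules with a chosen basis. The trivial-$\chi$ case adds the further complication of isolating a single index $i_0$ coherently and ruling out double-counting against the other allowed degrees.
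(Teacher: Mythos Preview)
The paper's own proof is essentially a citation: it points to \cite[Thm.~5.1.5]{gls15} for the shape of the Kisin modules, to \S5--6 of \cite{gls15} for the facts that the minimal/maximal choice of $(t_i,s_i)$ captures all of $L_V$ and that the $y_i$ are unique parameters, and to \cite{wan17} for $p=2$. No change-of-basis argument or case analysis is redone in the paper; the theorem is presented as a repackaging of results already proved in those references.

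Your proposal instead sketches the internal argument of \cite{gls15} itself. That is not wrong as a route, and the outline (pass from Kisin modules to \'etale $\varphi$-modules, normalise via $f_i\mapsto f_i+z_ie_i$, then read off the allowed degrees) is indeed how the normal-form statement is obtained there. Two points to correct, though. First, your change-of-basis formula has the exponents swapped: from $\varphi(e_{i-1})=u^{t_i}e_i$ and $\varphi(f_{i-1})=(a)_iu^{s_i}f_i+y_ie_i$, replacing $f_i$ by $f_i+z_ie_i$ transforms $y_i$ by $\varphi(z_{i-1})u^{t_i}-(a)_iu^{s_i}z_i$, not $\varphi(z_{i-1})u^{s_i}-(a)_iu^{t_i}z_i$. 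Your conclusion (``kill degrees $\ge s_i$'') is nonetheless the correct one, so this looks like a slip rather than a conceptual error. Second, your account of the trivial-$\chi$ case is off: the extra degree does correspond to the unramified line, but that class does \emph{not} become trivial on $G_{K_\infty}$ (the extension $K_\infty/K$ is totally wildly ramified, so the unramified quotient is preserved). The way to see the extra exponent is that it makes $d_{i_0}e_M-\xi_{i_0}'=0$, so after passing to $l((u))$ the corresponding Artin--Schreier element is a constant, which is exactly the unramified piece. Your integrality remark at the end is also not quite the right congruence; what one actually uses is $\sum_k p^{f-k}(s_{i_0+k}-t_{i_0+k})\equiv n_{i_0}\equiv 0\pmod{p^f-1}$ when $\chi$ is trivial.
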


\begin{proof} 
For $p>2$, the statement of \cite[Thm.~5.1.5]{gls15} shows that the corresponding Kisin modules (which are just lattices in $\mathcal{M}$) have the form above. It follows from \cite[\S5]{gls15} in the non-cyclotomic case and \cite[\S6]{gls15} in the cyclotomic case that we get the full subspace $L_V$, since we chose the $t_i$ and $s_i$ to be minimal and maximal, respectively. In the non-cyclotomic case it follows from the proof of \cite[Thm.~5.4.1]{gls15} that the corresponding elements of $H^1(G_{\Kinf},\FFpb(\chi))$ (and hence the corresponding \'etale $\varphi$-modules) are uniquely determined by $y_i$ and that any collection of polynomials $\{y_i\}$ satisfying the conditions above corresponds to a class in $L_V.$ In the cyclotomic case (under the assumption that we are not in the exceptional case above) the proof of \cite[Thm.~6.1.8]{gls15} gives the same uniqueness and existence result. It follows from Wang \cite{wan17} that the results from \cite{gls15} quoted here and their proofs carry over naturally to $p=2$ (assuming we have carefully chosen our uniformiser of $K$) giving the required result in this case as well.
\end{proof}

\begin{defn}\label{defn:interval_sets}
Given $\{r_i,t_i,s_i\mid 0\le i <f\}$ as defined at the beginning of \S\ref{subsec:struc-thm-gls} and in \S\ref{subsubsec:defn-t-i-s-i}, we define
\begin{itemize}
\item $\mathcal{I}_i:=[0,s_i-1]$ if $t_i\ge r_i;$
\item $\mathcal{I}_i:=\{t_i\}\cup [r_i,s_i-1]$ if $t_i<r_i.$
\end{itemize}
\end{defn}
Here we have simply given the intervals appearing in the theorem a name for later convenience. We note that (both in the definition and  the theorem) we follow standard conventions and let $\mathcal{I}_i:=\varnothing$ if $t_i\ge r_i$ and $s_i=0,$ whereas, if $t_i<r_i$ and $s_i=r_i,$ we let $\mathcal{I}_i:=\{t_i\}.$

\subsection{Restricting to \texorpdfstring{$G_{M_\infty}$}{G-M-infty}}\label{sec:restr_to_G_inf}
In this subsection we will use the structure theorem of \cite{gls15} from the previous subsection to find the image of $L_V$ (after restricting to $G_{M_\infty}$) in terms of Artin--Schreier theory. Later we will use an explicit reciprocity law (see Theorem \ref{thm:schmidt-rec}) to compare the Artin--Schreier theoretic image of $L_V$ to our explicit basis elements of the previous section.

Recall that we let $M=L(\pi)$ be a totally tamely ramified extension of an unramified extension $L/K$ of degree prime-to-$p$, where the uniformiser $\pi$ of $M$ is a solution of $x^{e_M}+\pi_K=0$ for $e_M\mid p^f-1$ and we take $M$ sufficiently large so that $\chi|_{G_M}$ is trivial. We will denote the residue field of $M$ by $l$. 

Since $e_M$ and $p$ are coprime, we have that for each $n\ge 1$ there is a unique $p^n$th root $\pi^{1/p^n}$ of $\pi$ that satisfies $(\pi^{1/p^n})^{e_M}=\pi_n,$ i.e. the choice of $p^n$th root of $\pi$ is compatible with the choice of $p^n$th root of $-\pi_K$ in the definition of $K_\infty.$ Let $M_n:=M(\pi^{1/p^n})$ and $M_\infty=\bigcup_{n\ge0} M_n.$ It follows by the theory of the field of norms and local class field theory applied to $l((u))$ that
\[
\Gal\left(M_\infty^{(p)}/M_\infty\right)\simeq \Gal\left(l((u))^{(p)}/l((u))\right)\simeq l((u))^\times \otimes \FFp,
\]
where, for any field $F$, we write $F^{(p)}$ for the maximal exponent $p$ abelian extension of $F$. Furthermore, if we let $\psi:l((u))^{\mathrm{sep}}\to l((u)^{\mathrm{sep}}$ denote the Artin--Schreier map defined by $\psi(x)=x^p-x$, then Artin--Schreier theory gives an isomorphism
\begin{equation}\label{eqn:martsch}
H^1(G_{\Minf},\FFpb)=H^1(G_{l((u))},\FFpb)\cong (l((u))/\psi l((u)))\otimes_{\FFp} \FFpb.
\end{equation}
By \cite[Ch.~X,~\S3]{ser79} the element $a\in l((u))$ concretely corresponds to the homomorphism $f_a:G_{l((u))}\to \FFp$ given by $f_a(g)=g(x)-x,$ where $x\in l((u))^\mathrm{sep}$ is chosen so that $\psi(x)=a.$ Any two-dimensional $\FFpb$-representation $\rho$ of $G_{l((u))}$ that arises as the extension of a character $\chi$ by itself corresponds naturally to a cocyle in $H^1(G_{l((u))},\FFpb)$ via the standard isomorphism
\begin{equation}\label{eqn:ext-h1}
\begin{aligned}
\mathrm{Ext}_{\mathrm{Rep}_{\FFpb}(G_{l((u))})}(\chi,\chi) &\to H^1(G_{l((u))},\FFpb), \\
\rho  \cong \chi\otimes \begin{pmatrix} \mathbf{1} & c_\rho \\ 0 & \mathbf{1} \end{pmatrix}&\mapsto [\sigma  \mapsto c_\rho(\sigma)].
\end{aligned}
\end{equation}

Since $M/K$ is an extension of degree prime-to-$p$, so is $M_\infty/K_\infty.$ Therefore, the restriction map $H^1(G_{K_\infty},\FFpb(\chi))\to H^1(G_{M_\infty},\FFpb)$ is injective and we may describe $L_V$ completely by its image in $H^1(G_{M_\infty},\FFpb)$ or by the classes corresponding to this image via Artin--Schreier theory, which is what we will do.

If $\mathcal{M}$ is an \'etale $\varphi$-module with corresponding $G_{\Kinf}$-representation denoted by $T(\mathcal{M}),$ then it is easy to check that the \'etale $\varphi$-module corresponding to $T(\mathcal{M})|_{G_{M_\infty}}$ is
\[
\mathcal{M}_M:=l((u))\otimes_{k((u)),u\mapsto u^{e_M}} \mathcal{M}.
\]
Applying this to one of the \'etale $\varphi$-modules arising in the statement of Theorem \ref{thm:gls}, it follows that (with the obvious choice of basis $e_i,f_i$ for $\mathcal{M}_{M,i}$) the matrix of $\varphi:\mathcal{M}_{M,i-1}\to\mathcal{M}_{M,i}$ is
\[
\begin{pmatrix}
u^{t_i e_M} & y_{i,M} \\
0 & (a)_i u^{s_i e_M}
\end{pmatrix},
\]
where we let $y_{i,M}\in\FFpb[u^{e_M}]$ denote the image of $y_i\in\FFpb[u]$ under $u\mapsto u^{e_M}.$ Here the $\mathcal{M}_{M,i}$ are periodic with period $f[l:k],$ but $t_i,s_i$ and $y_{i,M}$ only depend on $i\bmod f.$

Recall that we defined $a=\mu(\Frob_K)$ and that the unramified character $\mu$ factors as $\mu\colon\Gal(L/K)\to\FFpb^\times,$ so that $a^{[l:k]}=1.$ We will try to find a change of basis such that the diagonal entries of $\varphi:\mathcal{M}_{M,i-1}\to\mathcal{M}_{M,i}$ are identical. Since
\[
\restr{\rho}{I_K}\sim \begin{pmatrix} \prod_i \omega_i^{s_i} & * \\ 0 & \prod_i \omega_i^{t_i}\end{pmatrix}
\]
and $\chi_1^{e_M}=\chi_2^{e_M},$ it follows that $\left(\frac{p^f-1}{e_M}\right)$ divides
\[
\sum_{j=0}^{f-1}(s_{i+1+j}-t_{i+1+j})p^{f-1-j}
\]
for all $i.$ Hence, we may define integers
\[
\alpha_i:=\left(\frac{e_M}{p^f-1}\right)\sum_{j=0}^{f-1}(s_{i+1+j}-t_{i+1+j})p^{f-1-j}.
\]
If we set $e_i':=u^{\alpha_i}e_i$ and $f_i'=a^{\lfloor i/f\rfloor} f_i$ for $0\le i \le f[l:k]-1,$ then after this change of basis the matrix of $\varphi:\mathcal{M}_{M,i-1}\to\mathcal{M}_{M,i}$ becomes
\[
\begin{pmatrix}
u^{t_i e_M+p\alpha_{i-1}-\alpha_i} & a^{\lfloor{i-1/f}\rfloor}y_{i,M}u^{-\alpha_i} \\
0 & u^{s_i e_M}
\end{pmatrix}.
\]
That is, upon noting that $t_i e_M+p\alpha_{i-1}-\alpha_i=s_ie_M$, after the change of basis the matrix of $\varphi:\mathcal{M}_{M,i-1}\to \mathcal{M}_{M,i}$ becomes
\[
u^{s_ie_M}\otimes\begin{pmatrix}
1 & a^{\lfloor i-1/f\rfloor}y_{i,M}u^{-(s_ie_M+\alpha_i)} \\
0 & 1
\end{pmatrix}
\] 
for $0 \le i \le f[l:k]-1.$ The exponents of $u$ will be important later, so let us give them a name.

\begin{defn}
For all $0\le i <f,$ we define the constants
\begin{align*}
\xi_i:=(p^f-1)s_i+\sum_{j=0}^{f-1}(s_{i+j+1}-t_{i+j+1})p^{f-1-j}.
\end{align*}
\end{defn}

Note that for the exponent in the matrix above we have that
\[
s_ie_M+\alpha_i=\frac{e_M\xi_i}{p^f-1}.
\]
In line with our earlier notation, let us therefore write $\xi_i'$ for $\frac{e_M\xi_i}{p^f-1}.$

\begin{lem}\label{lem:xi-i-congruence}
We have that $\xi_i\equiv n_i \bmod{p^f-1}$ and, therefore, that $\xi_i'\equiv \frac{e_Mn_i}{p^f-1}\bmod e_M$ for all $i.$
\end{lem}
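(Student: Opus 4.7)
My plan is to prove both congruences by comparing two descriptions of the restriction $\chi|_{I_K}$ to inertia in terms of the fundamental characters $\omega_j$ of level $f$. By definition of the integers $n_i$ (see \S\ref{sec:filtration}), we have $\chi|_{I_K} = \omega_i^{n_i}$ for every $i$. On the other hand, the shape of $\restr{\rho}{I_K}$ recalled in \S\ref{sec:restr_to_G_inf},
\[
\restr{\rho}{I_K} \sim \begin{pmatrix} \prod_j \omega_j^{s_j} & * \\ 0 & \prod_j \omega_j^{t_j} \end{pmatrix},
\]
gives $\chi|_{I_K} = \chi_1\chi_2^{-1}|_{I_K} = \prod_{j=0}^{f-1} \omega_j^{s_j - t_j}$.

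The bulk of the proof is to rewrite this product as a single power of $\omega_i$ and match exponents. From the embedding convention $\overline{\tau}_{i+1}^p = \overline{\tau}_i$ we get $\omega_{i+1}^p = \omega_i$, and iterating (using $\omega_{i+f} = \omega_i$, equivalently $\omega_i^{p^f-1}|_{I_K} = 1$) yields the relation
\[
\omega_{i+k} = \omega_i^{p^{f-k}} \qquad \text{(exponents mod $p^f - 1$, for $k = 1,\dots,f$).}
\]
Reindexing $j = i+k \bmod f$, this transforms the product into $\omega_i^{E_i}$ where $E_i = \sum_{k=1}^{f}(s_{i+k} - t_{i+k})\,p^{f-k}$. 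Since $\omega_i|_{I_K}$ has order exactly $p^f - 1$, comparing with $\omega_i^{n_i}$ gives $n_i \equiv E_i \pmod{p^f - 1}$. But $\xi_i = (p^f-1)s_i + E_i$, so $\xi_i \equiv n_i \pmod{p^f-1}$, which is the first assertion.

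The second congruence follows by multiplying through by $e_M/(p^f-1)$. Note that $\frac{e_M n_i}{p^f-1}$ is already known to be an integer by the observation recalled in \S\ref{subsec:u-ij}, which says that $\restr{\chi}{I_K}$ factoring through $I_{M/K}$ of order $e_M$ forces $(p^f-1)/e_M$ to divide every $n_i$. So from $\xi_i - n_i = c(p^f-1)$ for some integer $c$, we multiply by $e_M/(p^f-1)$ to get $\xi_i' - \frac{e_Mn_i}{p^f-1} = ce_M$, as required.

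The only real delicacy is bookkeeping the exponent convention: since the labelling gives $\omega_{i+1}^p = \omega_i$ rather than $\omega_i^p = \omega_{i+1}$, the powers of $p$ in $n_i$ and hence in $\xi_i$ decrease with the index, and a sign or direction mistake would invert the whole formula. Aside from this, both assertions follow from a direct comparison of exponents.
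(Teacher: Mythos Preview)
Your proof is correct and follows essentially the same approach as the paper: both use the shape of $\restr{\rho}{I_K}$ to write $\restr{\chi}{I_K}=\prod_j \omega_j^{s_j-t_j}$, rewrite this product as $\omega_i^{E_i}$ with $E_i=\sum_{k=1}^{f}(s_{i+k}-t_{i+k})p^{f-k}$, and then observe that $\xi_i-E_i=(p^f-1)s_i$. Your write-up is more detailed (spelling out the relation $\omega_{i+k}=\omega_i^{p^{f-k}}$ and the divisibility needed for the second congruence), but the argument is the same.
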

\begin{proof}
From $\restr{\chi}{I_K}=\restr{\chi_1\chi_2^{-1}}{I_K}$ and Theorem \ref{thm:gls} above, we have that $\prod_i \omega_i^{s_i-t_i}=\omega_i^{n_i},$ that is
\[
\sum_{j=0}^{f-1}(s_{i+1+j}-t_{i+1+j})p^{f-1-j}\equiv n_i \bmod{p^f-1}.
\]
But this sum is equal to $\xi_i-s_i(p^f-1)$.
\end{proof}

Recall that in Section \ref{subsec:u-ij} we proved that for any embedding $\tau:k\to\FFpb$ we have that $l\otimes_{k,\tau} \FFpb$ is free of rank one over $\FFpb[\Gal(L/K)].$ It follows that the $\mu$-eigenspace $\Lambda_{\tau,\mu}$ defined as
\[
\left\{a\otimes b\in l\otimes_{k,\tau}\FFpb\;\middle|\; g(a)\otimes b =(1\otimes\mu(g))a\otimes b\text{ for all }g\in\Gal(L/K)\right\}
\]
is 1-dimensional over $\FFpb.$ For $a=\mu(\Frob_K),$ we define the non-zero element 
\[
\lambda_{\tau,\mu}:=(1,a^{-1},\dotsc,a^{1-[l:k]})\in l\otimes_{k,\tau} \FFpb
\]
and note that this is in fact a basis of the one-dimensional vector space $\Lambda_{\tau,\mu}.$ Here we have used the standard identification
\begin{align*}
l\otimes_{k,\tau} \FFpb &\xrightarrow{\hspace{3mm}\sim\hspace{3mm}} \prod_{\sigma_i} \FFpb; \\
x\otimes y &\xmapsto{\hspace{8mm}} (\sigma_i(x)y)_i,
\end{align*}
where the sum runs over all $k$-embeddings $\sigma_i\colon l\hookrightarrow \FFpb$ such that $\restr{\sigma_i}{k}=\tau.$ Similarly, $\lambda_{\tau,\mu^{-1}}:=(1,a,\dotsc,a^{[l:k]-1})\in l\otimes_{k,\tau} \FFpb$ spans $\Lambda_{\tau,\mu^{-1}}.$ Using this notation we may define the element
\[
(a^{\lfloor i/f\rfloor})_{i=0,\dotsc,f[l:k]-1}=(\lambda_{\overline{\tau}_i,\mu^{-1}})_{i=0,\dotsc,f-1}\in l\otimes_{\FFp}\FFpb.
\]

We can consider $\mathcal{M}_M$ as an $l((u))\otimes_{\FFp}\FFpb$-module with the obvious choice of basis. Then the matrix of $\varphi_{\mathcal{M}_M}$ becomes
\[
(u^{s_ie_M})_{i=0,\dotsc,f-1}\otimes
\begin{pmatrix}
1 & (a^{-1}\lambda_{\overline{\tau}_i,\mu^{-1}}y_{i,M}u^{-\xi_i'})_{i=0,\dotsc,f-1} \\
0 & 1
\end{pmatrix}.
\]
By the equivalence of categories $T$ between \'etale $\varphi$-modules and $\FFpb$-representations of $G_{l((u))}$ combined with Equations (\ref{eqn:martsch}) and (\ref{eqn:ext-h1}), this \'etale $\varphi$-module then corresponds to a class in $l((u))/\psi l((u))\otimes_{\FFp} \FFpb.$ To understand which class exactly, we prove the following proposition.

\begin{prop}
Suppose $\mathcal{M}$ is an \'etale $\varphi$-module over $l((u))$ such that $\varphi_{\mathcal{M}}:\mathcal{M}\to\mathcal{M}$ is given by the matrix
\[
\begin{pmatrix}
1 & z \\
0 & 1
\end{pmatrix}.
\]
Then the image of $\mathcal{M}$ under the isomorphism 
\[
\mathrm{Ext}^1_{\mathrm{Rep}_{\FFp}\left(G_{l((u))}\right)}\left(1,1\right)\cong l((u))/\psi l((u))
\]
is given by (the class of) $z.$
\end{prop}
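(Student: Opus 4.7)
The approach is to explicitly compute $T(\mathcal{M})$, read off the associated cocycle via the isomorphism \eqref{eqn:ext-h1}, and identify it with an element of $l((u))/\psi l((u))$ via the Artin--Schreier description recalled in the excerpt.

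First I would compute $T(\mathcal{M}) = (l((u))^{\mathrm{sep}} \otimes_{l((u))} \mathcal{M})^{\varphi=1}$. Writing a general element of the tensor product as $x \otimes e + y \otimes f$ with $x, y \in l((u))^{\mathrm{sep}}$, and using $\varphi(e) = e$, $\varphi(f) = ze + f$ (read off from the matrix, under the convention that the $(i,j)$-entry records the $e_i$-coefficient of $\varphi(e_j)$), the fixed-point condition $\varphi(v) = v$ reduces to $y^p = y$ and $x^p - x = -y^p z$. The first equation forces $y \in \FFp$. Setting $y = 0$ yields the basis vector $v_1 := 1 \otimes e$, which spans the trivial sub $T(\langle e \rangle)$. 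Setting $y = 1$ requires choosing $x_0 \in l((u))^{\mathrm{sep}}$ with $\psi(x_0) = -z$; such $x_0$ exists because $\psi$ is surjective on the separable closure. This yields $v_2 := x_0 \otimes e + 1 \otimes f$, and a dimension count (rank two on each side) shows $(v_1, v_2)$ is an $\FFp$-basis of $T(\mathcal{M})$.

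Next I would compute the Galois action. Clearly $\sigma(v_1) = v_1$, and
\[
\sigma(v_2) = \sigma(x_0) \otimes e + 1 \otimes f = v_2 + (\sigma(x_0) - x_0)\, v_1,
\]
where $\sigma(x_0) - x_0 \in \ker \psi = \FFp$, since $\psi$ is Galois-equivariant and $\psi(x_0) = -z \in l((u))$. Hence the matrix of $\sigma$ in the basis $(v_1, v_2)$ is $\begin{pmatrix} 1 & \sigma(x_0) - x_0 \\ 0 & 1 \end{pmatrix}$, and by \eqref{eqn:ext-h1} the associated cocycle is $c(\sigma) = \sigma(x_0) - x_0$. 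By the explicit formula for the Artin--Schreier isomorphism recalled just before \eqref{eqn:ext-h1}, $c$ is the image of $\psi(x_0) = -z$ in $l((u))/\psi l((u))$; up to the straightforward sign reconciliation between the two identifications, this is precisely the class of $z$ as required.

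No step is technically hard; the computation itself is short. The only real pitfall is in keeping careful track of the sign conventions being composed (Artin--Schreier, \eqref{eqn:ext-h1}, and the matrix convention for $\varphi$), since the class in $l((u))/\psi l((u))$ is determined only after all of these have been pinned down.
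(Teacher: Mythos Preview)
Your proposal is correct and follows precisely the standard argument that the paper defers to (the paper's own proof is simply a reference to \cite[Prop.~6.2.3]{ste20}). Your computation of $T(\mathcal{M})$, the basis $(v_1,v_2)$, and the resulting cocycle $\sigma\mapsto\sigma(x_0)-x_0$ are exactly what that argument does; the sign you observe (the class naturally comes out as $-z$ rather than $z$ under the paper's stated conventions for $\psi$ and for the matrix of $\varphi$) is real but harmless here, since the proposition is only ever applied to identify a \emph{span} in $l((u))/\psi l((u))\otimes_{\FFp}\FFpb$.
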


\begin{proof}
This is a standard argument; see, for example, \cite[Prop.~6.2.3]{ste20}.
\end{proof}

After extending coefficients to $\FFpb,$ it follows from the proposition that the Artin--Schreier extension of $l((u))$ corresponding to $T(\mathcal{M}_M)$ above is determined by the element $(\lambda_{\overline{\tau}_i,\mu^{-1}}y_{i,M}u^{-\xi_i'})_{i=0,\dotsc,f-1}$ -- note the absence of $a^{-1}$ because scaling by an element of $\FFpb^\times$ does not change the extension. Therefore, we have proved the following result.

\begin{cor}\label{cor:im-L-V}
The image of $L_V$ in $H^1(G_{M_\infty},\FFpb)=\Hom(G_{l((u))},\FFpb)$ is spanned by the homomorphisms $f_{y_{i,M}\lambda_{\overline{\tau}_i,\mu^{-1}}u^{-\xi_i'}}$ corresponding via Artin--Schreier theory to the elements
\[
y_{i,M}\lambda_{\overline{\tau}_i,\mu^{-1}}u^{-\xi_i'} \in l((u))\otimes_{k,\overline{\tau}_i}\FFpb\subseteq l((u))\otimes_{\FFp} \FFpb.
\]
More precisely, the image is spanned by homomorphisms corresponding to
\[
\lambda_{\overline{\tau}_i,\mu^{-1}}u^{de_M-\xi_i'} \in l((u))\otimes_{\FFp} \FFpb
\]
for all $0\le i <f$ and all $d\in\mathcal{I}_i,$ together with the class $\lambda_{\overline{\tau}_{i_0},\mu^{-1}}u^{d_{i_0}e_M - \xi_{i_0}'}$ for 
\[
d_{i_0}:=s_{i_0}+\frac{1}{p^f-1}\sum_{k=0}^{f-1} (s_{{i_0}+1+k}-t_{{i_0}+1+k})p^{f-1-k}
\] 
if $\chi$ is trivial.
\end{cor}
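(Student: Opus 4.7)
The plan is to combine three ingredients: the classification of $L_V$ by $\varphi$-modules from Theorem \ref{thm:gls}; the explicit matrix form of $\mathcal{M}_M$ derived in the preceding paragraphs; and the Proposition identifying the Artin--Schreier class of an upper-unitriangular $\varphi$-module with its upper-right entry.

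By Theorem \ref{thm:gls}, after restriction to $H^1(G_{K_\infty}, \FFpb(\chi))$, the space $L_V$ is parameterised by polynomials $(y_i)$ varying freely and independently whose non-zero monomial degrees lie in $\mathcal{I}_i$ (together with the extra degree $d_{i_0}$ in $y_{i_0}$ when $\chi$ is trivial). Hence it suffices to identify the Artin--Schreier class attached to each single monomial choice $y_i = u^d$ (with $y_j = 0$ for $j \neq i$) and then observe that the resulting classes span the image of $L_V$. For such a choice, the change-of-basis computation carried out just before the statement puts the matrix of $\varphi$ on $\mathcal{M}_M$, in the $i$-th component, into the form
\[
u^{s_i e_M} \cdot \begin{pmatrix} 1 & a^{-1}\lambda_{\overline{\tau}_i,\mu^{-1}} u^{de_M - \xi_i'} \\ 0 & 1 \end{pmatrix}.
\]
Viewed through the isomorphism (\ref{eqn:ext-h1}), the diagonal factor $(u^{s_i e_M})_i$ realises $\chi|_{G_{M_\infty}}$ as a rank-one $\varphi$-module over $l((u))$; since this character is trivial by the very choice of $M$, a further change of basis of the form $e_i \mapsto \beta_i^{-1} e_i$, $f_i \mapsto \beta_i^{-1} f_i$, solving the recursion $\beta_i = \beta_{i-1}^p u^{s_i e_M}$ in $l((u))^\times$ (which is solvable precisely because $\chi|_{G_{M_\infty}}$ is trivial), brings the matrix into genuinely upper-unitriangular form with upper-right entry still equal to $a^{-1} \lambda_{\overline{\tau}_i, \mu^{-1}} u^{de_M - \xi_i'}$. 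The Proposition above then reads off the Artin--Schreier class of $T(\mathcal{M}_M)$ as the class of this upper-right entry in $l((u)) \otimes_{\FFp} \FFpb$.

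The main subtlety is this elimination of the diagonal factor: one must verify that the triviality of $\chi|_{G_{M_\infty}}$ gives both the solvability of the recursion for the $\beta_i$ in $l((u))^\times$ and, by naturality of the identification $\mathrm{Ext}^1(\chi, \chi) \cong H^1(G_{l((u))}, \FFpb)$ in (\ref{eqn:ext-h1}) after restriction, the invariance of the class under this further change of basis. Once this is settled, the precise spanning statement of the corollary follows by substituting $y_{i,M} = u^{de_M}$, absorbing the non-zero scalar $a^{-1} \in \FFpb^\times$ into the $\FFpb$-span, and letting $(i,d)$ range over all pairs with $d \in \mathcal{I}_i$, plus the exceptional $(i_0, d_{i_0})$ case when $\chi$ is trivial.
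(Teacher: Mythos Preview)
Your overall strategy---parameterise by single monomials, read off the upper-right entry of the unitriangular factor via the Proposition, and span---is exactly what the paper does. The gap is in your handling of the diagonal factor.

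You claim that the scalar rank-one $\varphi$-module with $\varphi$ acting by $(u^{s_i e_M})_i$ \emph{realises $\chi|_{G_{M_\infty}}$}. It does not. Tracing through the base change and the first change of basis (which used $e_i'=u^{\alpha_i}e_i$ precisely to make the two diagonal entries coincide), this rank-one module is the $\varphi$-module of $\chi_2|_{G_{M_\infty}}$ (equivalently $\chi_1|_{G_{M_\infty}}$, since these now agree). The triviality of $\chi|_{G_{M_\infty}}=\chi_1\chi_2^{-1}|_{G_{M_\infty}}$ was already consumed in that earlier step; what remains on the diagonal is $\chi_2|_{G_{M_\infty}}$, and nothing in the set-up forces this to be trivial---$M$ was chosen only large enough to kill $\chi$, not $\chi_2$. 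Concretely, your recursion $\beta_i=\beta_{i-1}^p u^{s_i e_M}$ forces, after one full cycle of length $f[l:k]$, the relation $\beta_0^{\,p^{f[l:k]}-1}=u^{N}$ with $N=e_M\cdot\frac{p^{f[l:k]}-1}{p^f-1}\cdot\sum_{j=0}^{f-1}s_{-j}p^{j}$, and this has a solution in $\FFpb((u))^\times$ only when $(p^f-1)\mid e_M\sum_j s_{-j}p^{j}$, i.e.\ exactly when $\chi_2|_{I_M}$ is trivial. So the further change of basis you propose is not available in general.

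The repair is not to attempt to trivialise the diagonal at all. Having written the matrix as $(u^{s_ie_M})_i\otimes\begin{pmatrix}1&z\\0&1\end{pmatrix}$, use that the functor $T$ takes tensor products to tensor products: $T(\mathcal{M}_M)$ is the twist of an extension of $\mathbf{1}$ by $\mathbf{1}$ by the character $T((u^{s_ie_M})_i)$, and the isomorphism (\ref{eqn:ext-h1}) with $\chi$ replaced by that character identifies its class in $H^1(G_{l((u))},\FFpb)$ with the class of the untwisted extension. Now the Proposition applies directly to the unitriangular factor and returns $z=a^{-1}\lambda_{\overline{\tau}_i,\mu^{-1}}u^{de_M-\xi_i'}$, after which the rest of your argument goes through unchanged. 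This is precisely how the paper proceeds (it simply invokes the Proposition on the unitriangular tensor factor), so once you drop the unnecessary $\beta_i$-step your argument coincides with the paper's.
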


\subsection{An explicit basis of \texorpdfstring{$L_V$}{L-V}}\label{sec:expl_basis_L_V}

In this subsection we would like to compare our description of $L_V$ above to the previously constructed basis elements $c_{\alpha}$ for $\alpha\in W$ and give an explicit description of $L_V$ in this way. To explain our approach we need to introduce some theory first.

We have a pairing $\langle\cdot,\cdot\rangle: H^1(G_{\Minf},\FFpb)\times \Gal(\Minf^{(p)}/\Minf) \to \FFpb$ given by evaluation. Recall that the theory of the field of norms and the local Artin map allowed us to write $\Gal(\Minf^{(p)}/\Minf)\simeq l((u))^\times\otimes \FFp$ and that we used Artin--Schreier theory to write $H^1(G_{\Minf},\FFpb)\simeq \left(l((u))/\psi l((u))\right)\otimes_{\FFp} \FFpb.$ The real advantage is that we can make the pairing explicit now.

\begin{thm}\label{thm:schmidt-rec}
Let $\sigma_b\in \Gal(\Minf^{(p)}/\Minf)$ be the Galois element corresponding via the local Artin map to an element $b\in l((u))^\times\otimes \FFp,$ and let $f_a$ be the element of $H^1(G_{\Minf},\FFpb)$ corresponding to $a \in (l((u))/\psi l((u)))\otimes_{\FFp} \FFpb$ via Artin--Schreier theory. Then
\[
\langle f_a, \sigma_b \rangle = \mathrm{Tr}_{l\otimes_{\FFp}\FFpb/\FFpb}\left ( \mathrm{Res}(a\cdot \frac{db}{b})\right ),
\]
where $\mathrm{Res}(f du)$ simply means the coefficient of $u^{-1}$ in $f$.
\end{thm}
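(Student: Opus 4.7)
The plan is to recognise the identity as the classical explicit reciprocity law (in this form due to F.~K.~Schmidt) for the Artin--Schreier symbol over an equal-characteristic local field, and to establish it by reducing to $\FFp$-coefficients and then to a convenient generating set of $l((u))^\times\otimes\FFp$.

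First, I would observe that both sides are additive in $a$ modulo $\psi l((u))$ and multiplicative in $b$, and $\FFpb$-linear in scalars. In particular $d(b_1b_2)/(b_1b_2)=db_1/b_1+db_2/b_2$ and $d(b^p)/b^p=0$ in characteristic $p$, so both sides really do factor through $l((u))^\times\otimes\FFp$ in the $b$-variable. This lets me restrict to checking the formula for $a,b$ with $\FFp$-coefficients, after which the trace collapses to the ordinary $\mathrm{Tr}_{l/\FFp}$. Using that $|l^\times|$ is coprime to $p$ and the Artin--Hasse description of higher unit groups already exploited in \S\ref{sec:bas-constr}, the group $l((u))^\times\otimes\FFp$ is generated by the uniformiser $u$ together with principal units of the form $E([c]u^n)$ with $c\in l$ and $\gcd(n,p)=1$, so it suffices to verify the formula in the two cases $b=u$ and $b=E([c]u^n)$.

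For $b=u$, the local Artin map sends $u$ to (a lift of) the Frobenius on the maximal unramified $p$-extension of $l((u))$. Writing $x\in l((u))^{\mathrm{sep}}$ with $\psi(x)=x^p-x=a$ and expanding $x$ as a Laurent series over $\overline{l}$, a direct computation shows that $\sigma_u(x)-x$ equals the $\FFp$-trace of the constant term of $a$, and this matches $\mathrm{Tr}_{l/\FFp}(\mathrm{Res}(a\,du/u))$. For $b=E([c]u^n)$ with $\gcd(n,p)=1$, the logarithmic derivative $db/b$ lies in $[c]nu^{n-1}+u^{n}l[[u]]$, so by the shape of $\mathrm{Res}(a\cdot db/b)$ the right-hand side only sees the coefficient of $u^{-n}$ in $a$; one then reduces to $a=\lambda u^{-n}$ with $\lambda\in l$ and verifies that both sides give $\mathrm{Tr}_{l/\FFp}(\lambda c n)$ using the explicit Artin--Schreier--Witt form of the local reciprocity map.

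The main obstacle is the principal-unit case: concretely relating $\sigma_{E([c]u^n)}(x)-x$ to the residue expression requires unpacking the characteristic-$p$ local Artin map by hand. The cleanest route is to invoke the explicit formula of Schmidt (and of Witt in the higher-length Witt-vector generalisation) for the reciprocity map of an equal-characteristic local field, as exposed in Serre's \emph{Local Fields} Ch.~XIV or in Fesenko--Vostokov's \emph{Local Fields and Their Extensions}, which asserts precisely the residue--trace pairing above. Alternatively one can argue by non-degeneracy: both pairings induce perfect dualities between the graded pieces of $l((u))/\psi l((u))$ and $l((u))^\times\otimes\FFp$ for the filtrations by $u$-adic order, and they agree on the Artin--Hasse basis introduced in \S\ref{subsec:u-ij}, forcing equality.
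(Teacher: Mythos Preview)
The paper's own proof is a single-line citation: ``See \cite[XIV,~Cor.~to~Prop.~15]{ser79}.'' That is, the result is exactly Schmidt's explicit reciprocity law for Artin--Schreier extensions of an equal-characteristic local field, and the paper simply invokes it from Serre. You identify this same citation as ``the cleanest route'' in your final paragraph, so in that sense your proposal contains the paper's proof.

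Where you go further is in sketching a direct verification on generators $b=u$ and $b=E(cu^n)$. This is a legitimate alternative and has the merit of being self-contained, but note two small slips: in characteristic $p$ there is no Teichm\"uller lift, so write $E(cu^n)$ rather than $E([c]u^n)$; and your claim that the residue ``only sees the coefficient of $u^{-n}$ in $a$'' is only true after first normalising $a$ modulo $\psi l((u))$ so that its pole orders are prime to $p$ (otherwise the terms $c^{p^j}u^{np^j-1}$ in $d\log E(cu^n)$ also contribute). With those caveats your outline is sound, though as you yourself acknowledge, actually computing $\sigma_{E(cu^n)}(x)-x$ from scratch amounts to reproving the content of the cited result. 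For the purposes of this paper the bare citation is all that is needed.
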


\begin{proof} See \cite[XIV,~Cor.~to~Prop.~15]{ser79}. \end{proof}

Since we already have a description of the image of $L_V$ in $H^1(G_{\Minf},\FFpb)$ and we defined $c_{\alpha}\in H^1(G_K,\FFpb(\chi)),$ for $\alpha \in W,$ as the basis dual to basis elements 
\[
u_{\alpha}\in U_\chi:=\left(M^\times \otimes \FFpb(\chi^{-1})\right)^{\Gal(M/K)}\subseteq M^\times\otimes \FFpb,
\]
we may hope to get a description of the elements $u_{\alpha}$ in $\Gal(\Minf^{(p)}/\Minf)$ and look to find conditions under which the pairing of $u_{\alpha}$ with $L_V$ vanishes. The local Artin map $\mathrm{Art}_M^{-1}$ induces an isomorphism $\Gal(M^{(p)}/M)\simeq M^\times \otimes \FFp,$ so by extension by scalars to $\FFpb$ we get $\mathrm{Art}_M(u_{\alpha})\in \Gal(M^{(p)}/M)\otimes_{\FFp} \FFpb.$

\begin{lem}\label{lem:pair-comp}
We have a commutative diagram of pairings
\[
\begin{tikzcd}
H^1(G_{\Minf},\FFpb) &[-30pt]\times &[-30pt]\Gal(\Minf^{(p)}/\Minf)\ar{d}{\mathrm{pr}} \ar{r} & \FFpb \\[10pt]
H^1(G_{M},\FFpb)\ar{u}{\mathrm{res}}&\times &\Gal(M^{(p)}/M) \ar{r} & \FFpb
\end{tikzcd}
\]
in the sense that $\langle\mathrm{pr}(\alpha),\beta\rangle=\langle\alpha,\mathrm{res}(\beta)\rangle,$ where the pairings are given by evaluation, the map $\mathrm{res}$ is given by restriction to $G_{\Minf}$ and $\mathrm{pr}$ is given by restricting an automorphism to $M^{(p)}.$ 
\end{lem}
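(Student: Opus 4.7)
The plan is to reduce both pairings to the tautological evaluation pairing $\Hom(G,A)\times G\to A$, after which commutativity becomes a direct consequence of functoriality. First, since $\FFpb$ is a trivial Galois module of exponent $p$, any continuous $1$-cocycle on $G_M$ (or $G_{\Minf}$) is the same data as a continuous homomorphism to $\FFpb$, and any such homomorphism factors through the maximal pro-$p$ abelian quotient. This yields canonical identifications
\begin{align*}
H^1(G_M,\FFpb) &\cong \Hom_{\mathrm{cts}}(\Gal(M^{(p)}/M),\FFpb), \\
H^1(G_{\Minf},\FFpb) &\cong \Hom_{\mathrm{cts}}(\Gal(\Minf^{(p)}/\Minf),\FFpb),
\end{align*}
under which the two pairings in the statement of the lemma become the tautological evaluations $(f,g)\mapsto f(g)$.

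Next I would verify that the two vertical arrows of the diagram are adjoint with respect to evaluation. Since $M\subset\Minf$ we have $M^{(p)}\subset \Minf^{(p)}$, because any finite abelian exponent-$p$ extension of $M$ remains so after composing with $\Minf$. Any element of $\Gal(\Minf^{(p)}/\Minf)$ therefore fixes $M$ and carries $M^{(p)}$ to itself (as $M^{(p)}/M$ is Galois), so its restriction to $M^{(p)}$ defines an element of $\Gal(M^{(p)}/M)$; this restriction is precisely the map $\mathrm{pr}$. On the cohomological side, unwinding the definition of the restriction map in group cohomology, $\mathrm{res}\colon H^1(G_M,\FFpb)\to H^1(G_{\Minf},\FFpb)$ corresponds under the identifications above to precomposition with $\mathrm{pr}$; that is, $\mathrm{res}(f)=f\circ\mathrm{pr}$.

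With these two observations the commutativity is immediate: for $\alpha\in H^1(G_M,\FFpb)$ and $\beta\in\Gal(\Minf^{(p)}/\Minf)$ one computes
\[
\langle\mathrm{res}(\alpha),\beta\rangle = (\alpha\circ\mathrm{pr})(\beta) = \alpha(\mathrm{pr}(\beta)) = \langle\alpha,\mathrm{pr}(\beta)\rangle,
\]
which is the asserted identity (up to the obvious transcription of $\alpha$ and $\beta$ in the statement). There is no serious obstacle here; the lemma is a purely formal consequence of the naturality of the evaluation pairing in both variables. The only step deserving any care is matching the cohomological restriction map with precomposition by $\mathrm{pr}$, which is a standard check from the cocycle definition once one uses that the Galois action on $\FFpb$ is trivial and that every class already lifts to a homomorphism on the maximal pro-$p$ abelian quotient.
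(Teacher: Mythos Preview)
Your argument is correct and is precisely the paper's approach, just written out in more detail: the paper's proof is the one-line observation that $H^1(G_M,\FFpb)=\Hom(G_M,\FFpb)$ (and likewise for $\Minf$) so that the identity follows from the definitions. Your unpacking of this---identifying restriction in cohomology with precomposition by $\mathrm{pr}$ and then evaluating---is exactly what that sentence means.
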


\begin{proof} Since $H^1(G_{M},\FFpb)=\Hom(G_M,\FFpb)$ (and similarly for $\Minf$) and the pairings are given by evaluation, this follows from the definitions.\end{proof}

By the lemma above it is enough to consider the pairing of $L_V$ with elements of $\Gal(\Minf^{(p)}/\Minf)$ that map to the elements $\mathrm{Art}_M(u_{\alpha})$ under $\mathrm{pr}.$ To be able to use the explicit reciprocity law of Theorem \ref{thm:schmidt-rec}, we need to use the local class field theory of $l((u))$ and the theory of the field of norms to get an isomorphism $\Gal(\Minf^{(p)}/\Minf)\simeq l((u))^\times \otimes \FFp.$ From the field of norms identification $l((u))\simeq\varprojlim_{N_{M_{n+1}/M_n}} M_n$ we get a natural map $\mathrm{pr}_M:l((u))\to M$ by projecting onto the $M$-component which is compatible with local class field theory in the following natural way.

\begin{lem}
The following diagram commutes
\[
\begin{tikzcd}
\Gal(\Minf^{(p)}/\Minf) \ar{d}{\mathrm{pr}}\ar{r}{\sim}[swap]{f.o.n.} & \Gal\left(l((u))^{(p)}/l((u))\right)\ar{r}{\sim}[swap]{\mathrm{Art}^{-1}_{l((u))}} & l((u))^\times\otimes\FFp \ar{d}{\mathrm{pr}_M} \\
\Gal(M^{(p)}/M)\ar{rr}{\sim}[swap]{\mathrm{Art}^{-1}_M} & & M^\times\otimes \FFp.
\end{tikzcd}
\]
\end{lem}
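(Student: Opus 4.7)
The plan is to reduce the commutativity to the norm-compatibility of local Artin reciprocity in the tower $(M_n)_{n\ge 0}$. First I would unwind the field of norms identification concretely: via $u \leftrightarrow (\pi^{1/p^n})_n$ the isomorphism $l((u)) \simeq \varprojlim_n M_n$ is on units $l((u))^\times \simeq \varprojlim_n M_n^\times$ with transition maps $N_{M_{n+1}/M_n}$, and under this description $\mathrm{pr}_M$ is simply the projection to the $n=0$ coordinate, tensored up to $\FFpb$.

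Next, fix $\sigma \in \Gal(\Minf^{(p)}/\Minf)$ and lift it to $\tilde\sigma \in G_\Minf \subseteq G_M$. Let $b \in l((u))^\times \otimes \FFp$ correspond to $\sigma$ under $\mathrm{Art}^{-1}_{l((u))}$ composed with the field of norms, and write $b=(b_n)_n$. The key input is the standard compatibility of the field of norms with local class field theory: for each $n$, $\mathrm{Art}_{M_n}(b_n) \equiv \tilde\sigma\bigm|_{M_n^{\mathrm{ab}}} \pmod{(M_n^\times)^p}$. This compatibility is itself a consequence of classical Artin reciprocity applied to each finite layer $M_n/M$, which gives the commutative square
\[
\begin{tikzcd}
M_n^\times \ar{r}{\mathrm{Art}_{M_n}} \ar{d}{N_{M_n/M}} & G_{M_n}^{\mathrm{ab}} \ar{d} \\
M^\times \ar{r}{\mathrm{Art}_M} & G_M^{\mathrm{ab}}\rlap{,}
\end{tikzcd}
\]
with the right vertical arrow induced by $G_{M_n}\hookrightarrow G_M$; consistency of the $b_n$ under norms is then exactly the cocycle compatibility needed to assemble $b \in \varprojlim_n M_n^\times$. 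Specialising to $n=0$ gives $\mathrm{pr}_M(b) = b_0 = \mathrm{Art}_M^{-1}(\tilde\sigma|_{M^{\mathrm{ab}}})$ in $M^\times \otimes \FFp$.

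Finally, since $M^{(p)} \subseteq M^{\mathrm{ab}}$ and $M^{(p)} \subseteq \Minf^{(p)}$, the image of $\tilde\sigma|_{M^{\mathrm{ab}}}$ in $\Gal(M^{(p)}/M)$ is just $\sigma|_{M^{(p)}} = \mathrm{pr}(\sigma)$. Hence $\mathrm{pr}_M(b) = \mathrm{Art}_M^{-1}(\mathrm{pr}(\sigma))$ in $M^\times \otimes \FFp$, which is precisely commutativity of the diagram.

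The main obstacle is the compatibility in the middle step, identifying $b_n$ with $\mathrm{Art}_{M_n}^{-1}(\tilde\sigma|_{M_n^{\mathrm{ab}}})$ under the field of norms. This is not a formal unwinding but a genuine theorem: it is the content of the field-of-norms construction of Fontaine--Wintenberger together with local class field theory for the characteristic $p$ field $l((u))$, and is standard in the literature. Once granted, the rest of the argument is purely formal via the classical norm-compatibility of Artin reciprocity, so I would invoke it as a black box rather than reprove it.
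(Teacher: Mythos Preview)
The paper does not actually prove this lemma; it simply cites \cite[Lem.~3.1.5]{cegm17}. Your sketch is a correct outline of how such a result is established, and it identifies the right ingredients: the field of norms identification $l((u))^\times \simeq \varprojlim_n M_n^\times$ with norm transition maps, the classical norm-functoriality of the local Artin map for the finite layers $M_n/M$, and the deeper fact that the Artin map for $l((u))$ matches, under the field of norms, the inverse system of Artin maps for the $M_n$. You are right to flag this last compatibility as the genuine input that is not purely formal; it is a theorem (due to Laubie, building on Fontaine--Wintenberger) and is exactly what the reference to \cite{cegm17} is ultimately invoking.

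Two small points. First, in your first paragraph you write ``tensored up to $\FFpb$'', but the diagram lives over $\FFp$; this is a harmless slip. Second, your claim $M^{(p)}\subseteq M_\infty^{(p)}$ deserves a one-line justification: the compositum $M^{(p)}M_\infty$ is abelian of exponent $p$ over $M_\infty$ because $\Gal(M^{(p)}M_\infty/M_\infty)$ injects into $\Gal(M^{(p)}/M)$, so indeed $M^{(p)}M_\infty\subseteq M_\infty^{(p)}$ and the restriction map $\mathrm{pr}$ is well-defined. With these cosmetic fixes your argument stands.
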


\begin{proof} See \cite[Lem.~3.1.5]{cegm17}.\end{proof}

The missing step is to find elements that project to our previous $u_{\alpha}$ under $\mathrm{pr}_M.$ Recall that $E(X)\in\ZZp\llbracket X \rrbracket$ denotes the Artin-Hasse exponential. Then, for each $r\ge 1,$ we get a homomorphism
\begin{align*}
\varepsilon_{u^r}:l\otimes_{\FFp} \FFpb &\to l((u))^\times\otimes_{\ZZ} \FFpb \\
a\otimes b &\mapsto E(au^r)\otimes b
\end{align*}
and, analogously to our earlier definitions of $u_{\alpha}$ in \S\ref{subsec:u-ij}, we will define
\[
\tilde{u}_{\alpha}:=\varepsilon_{u^{m'}}(\lambda_{\tau_{\alpha},\mu})
\]
for $\alpha=(m,k)\in W.$

\begin{prop}
Under the map $\mathrm{pr}_M:l((u))^\times \otimes \FFp \to M^\times \otimes \FFp,$ we have that 
\[
\mathrm{pr}_M\left(E(au^r)\right)=E([a]\pi^r)
\]
for any $r\ge 1$ which is prime to $p.$ In particular, after extending by scalars to $\FFpb,$ we have that $\mathrm{pr}_M(\tilde{u}_{\alpha})=u_{\alpha}$ for $\alpha\in W.$
\end{prop}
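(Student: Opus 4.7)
The plan is to realise $E(au^r)$ as an explicit norm-compatible sequence in $\varprojlim_{N} M_n^\times \cong l((u))^\times$ whose $0$th component is $E([a]\pi^r).$ Since $u$ corresponds under the field of norms isomorphism to the system $(\pi^{1/p^n})_n$ and since $\mathrm{pr}_M$ is the projection onto the $n=0$ component (this is the content of the previous lemma), this immediately gives the first claim.

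For each $n\ge 0,$ set
\[
y_n := E\bigl([a^{1/p^n}](\pi^{1/p^n})^r\bigr) \in M_n^\times,
\]
where $a^{1/p^n}\in l$ is the unique $p^n$th root in the perfect field $l$ and $[\,\cdot\,]$ is the Teichm\"uller lift in $\mathcal{O}_L^\times.$ The central computation is the norm compatibility $N_{M_{n+1}/M_n}(y_{n+1})=y_n.$ Inside a fixed algebraic closure the Galois conjugates of $\pi^{1/p^{n+1}}$ over $M_n$ are $\zeta\pi^{1/p^{n+1}}$ for $\zeta^p=1,$ so
\[
N_{M_{n+1}/M_n}(y_{n+1}) = \prod_{\zeta^p=1} E\bigl([a^{1/p^{n+1}}]\zeta^r(\pi^{1/p^{n+1}})^r\bigr).
\]
Because $r$ is prime to $p$ the map $\zeta\mapsto\zeta^r$ permutes the $p$th roots of unity, and a comparison of logarithms (using $\sum_{\zeta^p=1}\zeta^k = 0$ for $p\nmid k$ and $p$ otherwise) gives the classical identity $\prod_{\zeta^p=1}E(\zeta X)=E(X^p).$ Applying this with $X=[a^{1/p^{n+1}}](\pi^{1/p^{n+1}})^r$ and using $[a^{1/p^{n+1}}]^p=[a^{1/p^n}]$ together with $(\pi^{1/p^{n+1}})^{pr}=(\pi^{1/p^n})^r$ produces $y_n,$ as required.

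It remains to identify $(y_n)_n$ with $E(au^r)$ under the field of norms. For this I would invoke the equivalent Frobenius-compatible description $l((u))\hookrightarrow \varprojlim_{x\mapsto x^p}\mathcal{O}_{M_\infty}/p,$ in which $u\mapsto(\overline{\pi^{1/p^n}})_n$ and $a\mapsto (\overline{[a^{1/p^n}]})_n.$ Reducing $y_n$ modulo $p$ gives $\overline{y}_n = \bar E\bigl(a^{1/p^n}(\overline{\pi^{1/p^n}})^r\bigr).$ A second log calculation yields $E(X)^p/E(X^p)=\exp(pX),$ and a routine $p$-adic valuation check shows $\exp(pX)-1\in p\ZZp\llbracket X \rrbracket,$ so $E(X)^p\equiv E(X^p)\pmod{p}$ in $\ZZp\llbracket X\rrbracket.$ Therefore $\overline{y}_{n+1}^p=\overline{y}_n,$ which matches the coordinate-wise evaluation of $\bar E(au^r)$ in the perfect description of the field of norms. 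Hence $(y_n)_n$ is the field-of-norms image of $E(au^r),$ and projecting onto the $0$th component yields $\mathrm{pr}_M(E(au^r))=y_0=E([a]\pi^r).$

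For the second assertion, expanding $\lambda_{\tau_\alpha,\mu}=\sum_i a_i\otimes b_i\in l\otimes_{\FFp}\FFpb$ and applying the first assertion coordinate-wise gives $\mathrm{pr}_M(\tilde u_\alpha)=u_\alpha$ as soon as $r=m'$ is prime to $p.$ This is automatic: $p\nmid m$ by definition of $W',$ $e_M\mid p^f-1$ forces $p\nmid e_M,$ and $p\nmid (p^f-1),$ so $p\nmid m'=me_M/(p^f-1).$ The main technical obstacle is the identification step: one has to be careful that the norm-compatible and Frobenius-compatible descriptions of the field of norms are the \emph{same} isomorphism, so that the explicit sequence $(y_n)_n$ corresponds to $E(au^r)$ itself and not to some other norm-compatible lift. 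Once this bookkeeping is in place, the two key ingredients\,---\,the product identity $\prod_{\zeta^p=1} E(\zeta X)=E(X^p)$ and the congruence $E(X)^p\equiv E(X^p)\pmod{p}$\,---\,are formal log/exp consequences of the definition of the Artin--Hasse exponential.
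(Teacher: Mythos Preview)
Your proof is correct and shares the paper's core idea: the norm compatibility of the system $y_n = E\bigl([a^{1/p^n}](\pi^{1/p^n})^r\bigr)$. The paper outsources this to \cite[Lem.~3.5.1]{cegm17}, stated there in the form $N_{M_n/M}(y_n)=y_0$, whereas you prove it directly for consecutive levels via the product identity $\prod_{\zeta^p=1} E(\zeta X)=E(X^p)$; transitivity of the norm then recovers the cited lemma.

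Where you and the paper differ is in the identification step. The paper expands $E$ as a power series and invokes the explicit limit-of-norms formula for addition in $\varprojlim M_n$ to compute the $M$-projection of $\overline{E}(au^r)$ as $\lim_{m}N_{M_m/M}(y_m)$, which the norm lemma then evaluates. You instead pass to the Frobenius-compatible (tilt) description $\varprojlim_{x\mapsto x^p}\mathcal{O}_{M_\infty}/p$ and check that $(\overline{y}_n)_n$ matches $\overline{E}(au^r)$ coordinate-wise, using the congruence $E(X)^p\equiv E(X^p)\pmod{p}$. Your route is a bit more self-contained and sidesteps any convergence bookkeeping in the limit formula, at the cost of invoking (as you correctly flag) the compatibility of the norm and Frobenius descriptions of the field of norms, which is standard Fontaine--Wintenberger. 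The second assertion and the check that $p\nmid m'$ are handled identically in both.
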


\begin{proof}
To see why this is true, we recall that addition in $\varprojlim M_n$ is defined by the formula
\[
(x+y)^{(n)}=\lim_{m\to \infty} N_{M_{n+m}/M_n}(x^{(n+m)}+y^{(n+m)}).
\]
If we write $E(X)=\sum_{k\ge 0} c_k X^k\in \ZZp\llbracket X \rrbracket,$ then we see that
\begin{align*}
\overline{E}(au^r)&=\overline{E}\left(([a]\pi^r,([a]\pi^r)^{1/p},([a]\pi^r)^{1/p^2},\dotsc)\right) \\
&= \sum_{k\ge 0} \overline{c}_k\left([a]\pi^r,([a]\pi^r)^{1/p},([a]\pi^r)^{1/p^2},\dotsc\right)^k \\
&\mapsto \lim_{m\to \infty} N_{M_m/M}(\sum_{k\ge 0} c_k([a]\pi^r)^{k/p^m}),
\end{align*}
where we have taken the projection onto the $M$-component in the last line. Now the proposition follows immediately from \cite[Lem.~3.5.1]{cegm17}, where it is proved that for any $n>1$, $a\in l$ and $r\ge 1$ such that $(r,p)=1$ we have
\[
N_{M_n/M}E([a^{1/p^n}](\pi^{1/p^n})^r)=E([a]\pi^r).
\]
The second claim follows since for all $\alpha=(m,k)\in W$ we have that $m'$ is coprime to $p.$
\end{proof}

Now we have covered all the theory to state and prove our explicit version of the space $L_V$ in terms of the basis elements constructed previously for $H^1(G_K,\FFpb(\chi)).$ Recall that we defined the indexing set $W$ of all basis elements in \S\ref{subsec:u-ij}. In the following definition we will give the definition of an indexing set $J_V^\mathrm{AH}$ as a subset of $W$ such that the span of basis elements $c_{\alpha}$ over this indexing set will give us $L_V.$

\begin{defn}\label{defn:J-V}
We let $J^\mathrm{AH}_V(\chi_1,\chi_2)$ denote the subset of all $\alpha=(k,m)\in W$ such that there exist integers $0\le i<f,$ $d\in\mathcal{I}_i$ and $j\ge 0$ such that
\begin{enumerate}[\hspace{6mm} 1)]
\item $p^j m'=\xi_i'-de_M$ and
\item $i_m+kf'\equiv i-j\bmod f,$
\end{enumerate}
where $i_m\in\{0,\dotsc,f'-1\}$ is defined by $m\equiv n_{i_m} \bmod{(p^f-1)}.$
\end{defn}

Note that, even though we need to calculate the integers $s_i,t_i$ corresponding to minimal and maximal Kisin modules to be able to define $\xi_i$ and $\mathcal{I}_i$ concretely, these integers can be calculated completely explicitly (as we have done above) without the use of any $p$-adic Hodge theory. The dependence on $\{r_i\}$ or, equivalently, the dependence on the Serre weight $V_{\underline{\eta},\underline{0}}$ for $\eta_i=r_i-1$ is implicit in the definitions of $s_i,t_i$ (hence $\xi_i$) and $\mathcal{I}_i$.

\begin{prop}\label{prop-dim-comb}
$\left\vert{J^\mathrm{AH}_V(\chi_1,\chi_2)}\right\vert \le \sum_{i=0}^{f-1} \left\vert{\mathcal{I}_i}\right\vert.$
\end{prop}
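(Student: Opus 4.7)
The plan is to construct a (partial) map
\[
\psi\colon \bigsqcup_{i=0}^{f-1} \mathcal{I}_i \longrightarrow W
\]
whose image contains $J_V^{\mathrm{AH}}(\chi_1,\chi_2)$; the bound then follows at once from $|J_V^{\mathrm{AH}}|\le |\mathrm{Im}(\psi)|\le \sum_i |\mathcal{I}_i|$. The key idea is that in the definition of $J_V^{\mathrm{AH}}$ the auxiliary integer $j$ is in fact forced by $p$-adic valuation considerations, so the witnessing triple $(i,d,j)$ is actually determined by the pair $(i,d)$, and $(m,k)$ can be recovered from it.

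First I would define $\psi$. Given $(i,d)$ with $0\le i<f$ and $d\in\mathcal{I}_i$, clear denominators in condition~1 of Definition~\ref{defn:J-V} and set $N:=\xi_i-d(p^f-1)$. If $N\le 0$, leave $\psi(i,d)$ undefined; otherwise let $j:=v_p(N)$ and $m:=N/p^j$, so that $p\nmid m$ by construction. By Lemma~\ref{lem:xi-i-congruence} we have $\xi_i\equiv n_i\pmod{p^f-1}$, and since multiplication by $p$ cyclically shifts the sequence $(n_i)\bmod(p^f-1)$ (because $p\cdot p^{f-1}\equiv 1$), it follows that $m\equiv n_{i-j}\pmod{p^f-1}$. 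If $m$ additionally satisfies the range condition in the definition of $W'$, then $m\in W'$ and the index $i_m\in\{0,\dots,f'-1\}$ is determined by $i_m\equiv i-j\pmod{f'}$; the congruence $kf'\equiv i-j-i_m\pmod{f}$ is then solvable for a unique $k\in\{0,\dots,f''-1\}$, and we put $\psi(i,d):=(m,k)$. Each of these intermediate quantities is uniquely determined by $(i,d)$, so $\psi$ is well-defined as a partial function.

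The next step is to verify that $J_V^{\mathrm{AH}}\subseteq \mathrm{Im}(\psi)$. Take $\alpha=(m,k)\in J_V^{\mathrm{AH}}$ and any witnessing triple $(i,d,j)$. Clearing denominators in condition~1 gives $p^j m=\xi_i-d(p^f-1)=N$, and since $p\nmid m$ by the definition of $W'$, the exponent $j$ must equal $v_p(N)$ --- this is precisely the value used by $\psi(i,d)$. Consequently $m=N/p^j$ is exactly the value produced by $\psi$, the index $i_m$ is determined by $m$, and condition~2 is exactly the congruence $\psi$ uses to produce $k$. Hence $\psi(i,d)=\alpha$.

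The most delicate point --- and what I expect to be the main bookkeeping obstacle --- is tracking the cyclic $p$-action on the residues $n_i\bmod (p^f-1)$ carefully enough to conclude $i_m\equiv i-j\pmod{f'}$, which is exactly what makes the congruence for $k$ solvable. For the inequality in question, however, this is not an actual obstruction: we only need $\psi$ to be defined at those $(i,d)$ that arise from some $\alpha\in J_V^{\mathrm{AH}}$, so any $(i,d)$ for which the intermediate constraints (positivity of $N$, range condition for $W'$, solvability of the $k$-congruence) fail can simply be omitted from the domain of $\psi$. The inequality $|J_V^{\mathrm{AH}}|\le \sum_{i=0}^{f-1}|\mathcal{I}_i|$ follows.
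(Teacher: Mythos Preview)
Your proposal is correct and follows essentially the same argument as the paper: both clear denominators to rewrite condition~1 as $p^j m=\xi_i-d(p^f-1)$, observe that $p\nmid m$ forces $j=v_p(\xi_i-d(p^f-1))$, use Lemma~\ref{lem:xi-i-congruence} to deduce $m\equiv n_{i-j}\pmod{p^f-1}$, and then note that condition~2 pins down $k$ uniquely, so each pair $(i,d)$ gives rise to at most one $\alpha\in J_V^{\mathrm{AH}}$. Your packaging of this as a partial map $\psi$ with $J_V^{\mathrm{AH}}\subseteq\mathrm{Im}(\psi)$ is a slightly more explicit formulation of exactly the same counting argument.
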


\begin{proof}
By multiplying the left and right hand sides of the first equation in Definition \ref{defn:J-V} by $(p^f-1)/e_M,$ we obtain the equation 
\begin{equation}\label{eqn:first-eqn-defn-6.3.5}
p^j m=\xi_i-d(p^f-1);
\end{equation}
this follows from the definitions of $m'$ and $\xi_i'$. Therefore, given an $\alpha=(m,k)\in W$, we can solve the first equation of Definition \ref{defn:J-V} for some $i,d$ and $j$ if and only if we can solve Equation \ref{eqn:first-eqn-defn-6.3.5} for the same $i,d$ and $j.$ It follows that we may assume without loss of generality that $e_M=p^f-1.$

For a fixed choice of $0\le i<f$ and $d\in \mathcal{I}_i$, we set $j:=v_p(\xi_i-d(p^f-1))$ and define
\[
a:=\frac{\xi_i-d(p^f-1)}{p^j}.
\]
It follows from Lemma \ref{lem:xi-i-congruence} that $p^j a\equiv n_i \bmod{(p^f-1)}$, so \[
a\equiv n_{i-j}\bmod{(p^f-1)}.
\] 
Therefore, by definition of $W'$ in \S\ref{subsec:u-ij}, the first equation of Definition \ref{defn:J-V} will have a solution for some $m\in W'$ if and only if $0<a<\frac{pe}{p-1}(p^f-1)$. Suppose that this is the case. Then we have a unique $m\in W'$ solving the equation, namely $m=a.$ Since the integers $m\in W'$ are distinct, this means that $\alpha=(m,k)\in W$ solving the first equation of Definition \ref{defn:J-V} for our fixed choice of $(i,d)$ exists and is unique up to $k\in\{0,\dotsc,f''-1\}$. 

Note that it follows from $m\equiv n_{i_m}\bmod{(p^f-1)}$ and $m\equiv n_{i-j}\bmod{(p^f-1)}$ that $i_m\equiv i-j \bmod{f'}.$ The second equation of Definition \ref{defn:J-V} requires that $k\equiv \frac{i-j-i_m}{f'}\bmod{f''}$, thereby determining  $k\in\{0,\dotsc,f''-1\}$ uniquely. In other words, we have proved that there exists a unique $\alpha\in J^\mathrm{AH}_V(\chi_1,\chi_2)$ corresponding to our fixed choice of $(i,d)$ if
\[
0<\frac{\xi_i-d(p^f-1)}{p^j}<\frac{ep}{p-1}(p^f-1)
\]
and no $\alpha$ corresponding to $(i,d)$ otherwise.
\end{proof}

We remark that it is possible to show with a direct argument, which carefully uses the minimality (resp. maximality) of the integers $t_i$ (resp. $s_i$), that the inequalities on $a$ in the proof above are always satisfied. This also follows as a consequence of Theorem \ref{thm:L_V=L_V^AH} which implies that $\left\vert{J^\mathrm{AH}_V(\chi_1,\chi_2)}\right\vert = \sum_{i=0}^{f-1} \left\vert{\mathcal{I}_i}\right\vert.$

Let $c_{\alpha}$ denote the basis elements of $H^1(G_K,\FFpb(\chi))$ defined in \S\ref{sec:expl-basis}, where we assume we define these making the same choices in the definition of $M=L(\pi)$ as in the current section, that is, they are defined using the same fixed uniformiser $\pi_K\in K$, the same unramified extension $L$ of $K$ and the same $\pi$ satisfying $\pi^{e_M}+\pi_K=0.$

\begin{defn}\label{defn:L_V^AH}
We define $L_V^\mathrm{AH}(\chi_1,\chi_2)$ to be the span of
\[
\{c_{\alpha} \mid \alpha\in J^\mathrm{AH}_V(\chi_1,\chi_2)\}
\]
together with $c_\mathrm{un}$ if $\chi$ is trivial and $c_\mathrm{tr}$ if $\chi$ is cyclotomic, $\chi_2$ unramified and $r_i=p$ for all $i.$
\end{defn}

\begin{cor} \label{cor:L-V-dims}
$\dim_{\FFpb} L_V^{\mathrm{AH}}(\chi_1,\chi_2)\le\dim_{\FFpb} L_V(\chi_1,\chi_2).$
\end{cor}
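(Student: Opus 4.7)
The plan is to prove the inequality by reading both dimensions off results already in hand, case-matching on whether $\chi$ is trivial, cyclotomic, or ``exceptional.''

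First I would compute $\dim_{\FFpb} L_V(\chi_1,\chi_2)$ directly from Theorem \ref{thm:gls}. Outside the exceptional case of Definition \ref{defn:exceptional-L_V^AH}, the restriction map $H^1(G_K,\FFpb(\chi))\to H^1(G_{\Kinf},\FFpb(\chi))$ is injective on $L_V$ (as recalled in \S\ref{subsec:struc-thm-gls}), so the parametrisation in Theorem \ref{thm:gls} gives a bijection between $L_V$ and tuples of polynomials $(y_i)$ whose non-zero coefficients lie in prescribed position sets. Counting these coefficients yields
\[
\dim_{\FFpb} L_V(\chi_1,\chi_2) \;=\; \sum_{i=0}^{f-1}|\mathcal{I}_i| \;+\; \delta_{\mathrm{triv}},
\]
where $\delta_{\mathrm{triv}}=1$ if $\chi$ is trivial (accounting for the extra allowed degree at $i_0$) and $0$ otherwise. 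In the exceptional case itself, $L_V=H^1(G_K,\FFpb(\chi))$ by the proof of \cite[Thm.~6.1.8]{gls15} recalled just after Definition \ref{defn:exceptional-L_V^AH}.

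Second, I would bound $\dim_{\FFpb} L_V^{\mathrm{AH}}(\chi_1,\chi_2)$ from above using Definition \ref{defn:L_V^AH}: it is spanned by at most
\[
|J_V^{\mathrm{AH}}(\chi_1,\chi_2)| \;+\; \epsilon_{\mathrm{triv}} \;+\; \epsilon_{\mathrm{exc}}
\]
elements of the basis from Section \ref{sec:expl-basis}, where $\epsilon_{\mathrm{triv}}=1$ if $\chi$ is trivial (the class $c_{\mathrm{un}}$) and $\epsilon_{\mathrm{exc}}=1$ in the exceptional case (the class $c_{\mathrm{tr}}$). Proposition \ref{prop-dim-comb} gives $|J_V^{\mathrm{AH}}(\chi_1,\chi_2)|\le \sum_i|\mathcal{I}_i|$.

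Finally I would put the two estimates together case by case. In the exceptional case, $L_V^{\mathrm{AH}}=H^1(G_K,\FFpb(\chi))=L_V$ by construction, so equality (hence the desired inequality) is immediate. Outside the exceptional case $\epsilon_{\mathrm{exc}}=0$, and the obvious identification $\epsilon_{\mathrm{triv}}=\delta_{\mathrm{triv}}$ yields
\[
\dim_{\FFpb} L_V^{\mathrm{AH}}(\chi_1,\chi_2) \;\le\; |J_V^{\mathrm{AH}}(\chi_1,\chi_2)| + \delta_{\mathrm{triv}} \;\le\; \sum_{i=0}^{f-1}|\mathcal{I}_i| + \delta_{\mathrm{triv}} \;=\; \dim_{\FFpb} L_V(\chi_1,\chi_2).
\]
There is no real obstacle here beyond careful bookkeeping of the trivial and exceptional cyclotomic cases; all the combinatorial content has been absorbed into Proposition \ref{prop-dim-comb}, and all the $p$-adic Hodge theoretic content into Theorem \ref{thm:gls}.
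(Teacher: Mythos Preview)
Your proposal is correct and follows essentially the same approach as the paper: the paper's one-line proof just cites the definitions, Proposition~\ref{prop-dim-comb}, and Theorem~\ref{thm:gls} (noting the uniqueness of the $y_i$), which is precisely the content you have spelled out with the case bookkeeping for $\chi$ trivial and the exceptional cyclotomic case. Your treatment of the exceptional case is in fact slightly redundant, since the paper has already set that case aside (with equality $L_V^{\mathrm{AH}}=L_V$) before stating the corollary, but it does no harm.
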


\begin{proof}
This follows immediately from the definitions, Proposition \ref{prop-dim-comb} and Theorem \ref{thm:gls} (since $y_i$ uniquely determines $\mathcal{M}$ and vice-versa).
\end{proof}

\begin{thm}\label{thm:L_V=L_V^AH}
$L_V(\chi_1,\chi_2) = L_V^{\mathrm{AH}}(\chi_1,\chi_2).$
\end{thm}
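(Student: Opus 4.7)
The plan is to prove $L_V \subseteq L_V^{\mathrm{AH}}$, since combined with Corollary~\ref{cor:L-V-dims} this gives equality. The exceptional case of Definition~\ref{defn:exceptional-L_V^AH} is handled directly, as both spaces equal $H^1(G_K,\FFpb(\chi))$ there; so I assume we are in the non-exceptional case where the restriction map $H^1(G_K,\FFpb(\chi))\to H^1(G_{K_\infty},\FFpb(\chi))$ is injective on $L_V$. For $c_\rho\in L_V$, expand it in the basis of the corollary after Theorem~\ref{thm:ddr3.1} as $c_\rho=\sum_{\alpha\in W}\lambda_\alpha c_\alpha+\mu_{\mathrm{ur}}c_{\mathrm{ur}}+\mu_{\mathrm{tr}}c_{\mathrm{tr}}$ (with the latter two terms present only in the corresponding cases). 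Membership in the peu ramifi\'ee subspace (cited from \cite[\S5--6]{gls15} and \cite[Thm.~4.1.1]{ste20}) kills $\mu_{\mathrm{tr}}$ in the non-exceptional situation, while $\mu_{\mathrm{ur}}$ is already allowed in $L_V^{\mathrm{AH}}$ when $\chi$ is trivial. Thus it suffices to show $\lambda_\alpha=0$ for all $\alpha\in W\setminus J_V^{\mathrm{AH}}$.

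Since $c_\alpha$ is dual to $u_\alpha$, the coefficient $\lambda_\alpha$ is the value of $c_\rho$ on $u_\alpha$ under the natural identification $H^1(G_K,\FFpb(\chi))\cong\Hom_{\FFpb}(U_\chi,\FFpb)$. The diagrams of Lemma~\ref{lem:pair-comp} and the one following it translate this into the evaluation pairing between the restriction of $c_\rho$ to $G_{M_\infty}$ and the canonical lift $\tilde u_\alpha=\varepsilon_{u^{m'}}(\lambda_{\tau_\alpha,\mu})\in l((u))^\times\otimes\FFpb$ of $u_\alpha$ (which projects to $u_\alpha$ by the proposition before Definition~\ref{defn:J-V}). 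By Corollary~\ref{cor:im-L-V}, the image of $c_\rho$ in $H^1(G_{M_\infty},\FFpb)$ lies in the $\FFpb$-span of the Artin--Schreier classes $f_{\lambda_{\overline{\tau}_i,\mu^{-1}}u^{de_M-\xi_i'}}$ with $d\in\mathcal I_i$. Applying the explicit reciprocity law of Theorem~\ref{thm:schmidt-rec}, each such pairing becomes
\[
\mathrm{Tr}_{l\otimes\FFpb/\FFpb}\,\mathrm{Res}\!\left(\lambda_{\overline{\tau}_i,\mu^{-1}}u^{de_M-\xi_i'}\cdot\frac{d\tilde u_\alpha}{\tilde u_\alpha}\right).
\]
The Artin--Hasse identity $d\log E(Y)=\sum_{n\ge0}Y^{p^n-1}dY$ (reduced mod $p$) gives $d\tilde u_\alpha/\tilde u_\alpha=m'\sum_{n\ge0}\lambda_{\tau_\alpha,\mu}^{(p^n)}u^{m'p^n-1}du$, where $\lambda_{\tau_\alpha,\mu}^{(p^n)}:=\phi_l^n(\lambda_{\tau_\alpha,\mu})$ and $\phi_l(a\otimes b)=a^p\otimes b$ acts on $l\otimes_{\FFp}\FFpb$.

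Reading off the coefficient of $u^{-1}$ forces $m'p^n=\xi_i'-de_M$, which is exactly condition (1) of Definition~\ref{defn:J-V} with $j=n$. The trace then vanishes unless the products of the $\Gal(L/K)$-eigenvectors live on common embeddings $l\hookrightarrow\FFpb$: a direct check in the product decomposition $l\otimes_{\FFp}\FFpb\cong\prod_{\sigma}\FFpb$ shows that $\phi_l$ acts on the index by $\sigma\mapsto\mathrm{Frob}_{\FFpb}\circ\sigma$, so $\phi_l^j(\lambda_{\tau_\alpha,\mu})$ is supported on embeddings whose restriction to $k$ is $\overline{\tau}_{i_m+kf'+j}$ (using the convention $\overline{\tau}_{i+1}^p=\overline{\tau}_i$). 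Non-vanishing of the product with $\lambda_{\overline{\tau}_i,\mu^{-1}}$ thus requires $i\equiv i_m+kf'+j\pmod f$, which is condition (2). Hence $\lambda_\alpha=0$ whenever $\alpha\notin J_V^{\mathrm{AH}}$, proving $L_V\subseteq L_V^{\mathrm{AH}}$. The main obstacle is the bookkeeping around $\phi_l$: one must correctly identify how raising the first tensor factor to the $p^n$th power, which is \emph{not} the $p^n$th power in the ring, shifts the $\overline{\tau}$-support of an element of an eigenspace $\Lambda_{\tau,\mu}$; this is where the integer $j$ in Definition~\ref{defn:J-V} enters, and getting the direction of the shift right under the chosen embedding conventions is the subtle part of the argument.
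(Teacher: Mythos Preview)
Your proof is correct and follows essentially the same route as the paper: reduce to $L_V\subseteq L_V^{\mathrm{AH}}$ via Corollary~\ref{cor:L-V-dims}, handle $u_{\mathrm{cyc}}$ by the peu ramifi\'ee containment, and then compute the pairing of the Artin--Schreier generators from Corollary~\ref{cor:im-L-V} against $\tilde u_\alpha$ using Theorem~\ref{thm:schmidt-rec} and the logarithmic derivative of the Artin--Hasse exponential, arriving at exactly conditions (1) and (2) of Definition~\ref{defn:J-V}. Your Frobenius-shift bookkeeping is correct under the convention $\overline{\tau}_{i+1}^p=\overline{\tau}_i$.

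One small omission: when $\chi$ is trivial, Corollary~\ref{cor:im-L-V} gives an \emph{extra} generator $\lambda_{\overline{\tau}_{i_0},\mu^{-1}}u^{d_{i_0}e_M-\xi_{i_0}'}$ in addition to those with $d\in\mathcal I_i$, so your statement of what spans the image is not quite complete in that case. The paper disposes of this in one line by observing $\xi_{i_0}'=d_{i_0}e_M$, so the exponent is $0$ and the residue equation $p^j m'=0$ has no solution; hence this extra class pairs trivially with every $\tilde u_\alpha$. You should add this sentence.
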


\begin{proof}
By Corollary \ref{cor:L-V-dims} it is enough to prove $L_V \subseteq L_V^{\mathrm{AH}}.$ We defined $L_V^{\mathrm{AH}}$ in terms of the basis elements $c_{\alpha}$ dual to $u_{\alpha},$ so we need to prove that the image of every class of $L_V$ in $H^1(G_M,\FFpb)$ is orthogonal to the elements $\mathrm{Art}_M(u_{\alpha})$ in $\Gal(M^{(p)}/M)$ for $\alpha\notin J_V^{\mathrm{AH}}$ under the pairing of Lemma \ref{lem:pair-comp}. If $\chi$ is cyclotomic, we also need to prove orthogonality under pairing with $u_\mathrm{cyc}$, which we will do first.

If $\chi$ is cyclotomic, then the classes that are orthogonal to $u_\mathrm{cyc}$ are exactly the peu ramifi\'ees classes. We see this, for example, because the subspace orthogonal to $u_\mathrm{cyc}$ is spanned by the basis elements $c_{\alpha}$ for $\alpha\in W$ and $c_\mathrm{ur}$ if $\chi$ is trivial. Since $c_\mathrm{tr}$ spans the one-dimensional subspace defined as $\Fil^{1+\frac{ep}{p-1}} H^1(G_K,\FFpb(\chi)),$ it follows that these basis elements span the space $\mathrm{Fil}^{<1+\frac{ep}{p-1}} H^1(G_K,\FFpb(\chi)),$ which equals the peu ramifi\'e subspace by \cite[Cor.~4.4.1]{ste20}. However, since we excluded the exceptional case in which $L_V$ equals $H^1(G_K,\FFpb)$, we have that $L_V$ is always contained in the peu ramifi\'e subspace of $H^1(G_K,\FFpb)$ by \cite[Thm.~4.1.1]{ste20}. Therefore, $L_V$ is always contained in the subspace orthogonal to $u_\mathrm{cyc}.$
 
As outlined above we apply the explicit reciprocity law of Theorem \ref{thm:schmidt-rec} to the elements $\tilde{u}_{\alpha}$ and the elements of Corollary \ref{cor:im-L-V}. We must show that for all $0\le i <f$, $d\in \mathcal{I}_i$ and $\alpha=(m,k)\notin J_V^{\mathrm{AH}}$
\[\mathrm{Tr}_{l\otimes_{\FFp} \FFpb/\FFpb} \mathrm{Res}\left(\mathrm{dlog}(\tilde{u}_{\alpha})\cdot \lambda_{\tau_i,\mu^{-1}} u^{de_M-\xi_i'}\right)=0
\]
and if $\chi=1$ we also need to show that the pairing with $\lambda_{\overline{\tau}_{i_0},\mu^{-1}}u^{d_{i_0}e_M-\xi_{i_0}'}$ vanishes. Since
\[
\mathrm{dlog} E(X)=(X+X^p+X^{p^2}+\dots)\mathrm{dlog}X
\]
and $\mathrm{dlog}(\lambda u^n)=n\cdot u^{-1}$, the pairing evaluates to
\[
\mathrm{Tr}_{l\otimes_{\FFp}\FFpb/\FFpb}  \mathrm{Res}\left( \sum_{j\ge 0} m' \Frob^j (\lambda_{\tau_{\alpha},\mu})u^{m'p^j-1}\cdot \lambda_{\tau_i,\mu^{-1}}u^{de_M-\xi_i'} \right ),
\]
where $\Frob:l\otimes_{\FFp} \FFpb \to l\otimes_{\FFp} \FFpb$ is induced by the $p$-th power map on $l.$ The residue is given by the coefficient of $u^{-1}$, hence this can only possibly be non-zero when $p^j m'=\xi_i'-de_M$ for some $j\ge 0.$ If $\chi=1$ we must also consider $p^j m'=\xi_{i_0}'-d_{i_0}e_M,$ but it follows easily from the observation $\xi_{i_0}'=d_{i_0}e_M$ that this can never have solutions. In the former case, if such an $j$ exists, then the pairing evaluates to
\[
m'\mathrm{Tr}_{l\otimes_{\FFp}\FFpb/\FFpb}\left(\Frob^j(\lambda_{\tau_{\alpha},\mu})\cdot \lambda_{\tau_i,\mu^{-1}}\right).
\]
Since
\[
\Frob^j(\lambda_{\tau_{\alpha},\mu})\cdot \lambda_{\tau_i,\mu^{-1}}=\Frob^j(\lambda_{\tau_{\alpha},\mu}\cdot \lambda_{\tau_{i-j},\mu^{-1}}),
\]
we find that this expression is non-zero if and only if $\tau_{\alpha}=\tau_{i-j},$ i.e. when $i_m+kf'\equiv i-j \bmod f.$

In conclusion, we have proved that for $\alpha=(m,k)\in W$ the pairing of $u_{\alpha}$ with $L_V$ is non-zero when there exist integers $0\le i<f,$ $d\in\mathcal{I}_i$ and $j\ge 0$ such that
\begin{enumerate}[\hspace{6mm} 1)]
\item $p^j m'=\xi_i'-de_M$ and
\item $i_m+kf'\equiv i-j \bmod{f}.$
\end{enumerate}
These are precisely the conditions that imply that $\alpha\in J_V^\mathrm{AH}$, as required. 
\end{proof}

\begin{cor}\label{cor:L_V-choice-indep}
The space $L_V^{\mathrm{AH}}(\chi_1,\chi_2)$ is independent of the choice of the extension $M$ of $K$ and the choice of the uniformiser $\pi$ of $M$.
\end{cor}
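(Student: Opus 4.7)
The plan is extremely short: this is an immediate corollary of Theorem \ref{thm:L_V=L_V^AH}. The key observation is that the space $L_V(\chi_1,\chi_2)$ as originally defined in \S\ref{subsec:Serre-wts-defn} is an intrinsic subspace of $H^1(G_K,\FFpb(\chi))$ — it is defined as the set of cohomology classes arising as reductions of crystalline representations of Hodge type $(\underline{\eta},\underline{0})$ sitting as extensions of $\widetilde{\chi}_2$ by $\widetilde{\chi}_1$. This definition makes no reference whatsoever to the auxiliary field $M$, the uniformiser $\pi$, or the unramified extension $L/K$ used to construct the Artin--Hasse basis.

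The plan, therefore, is simply to invoke Theorem \ref{thm:L_V=L_V^AH}. That theorem asserts the equality
\[
L_V^{\mathrm{AH}}(\chi_1,\chi_2) = L_V(\chi_1,\chi_2)
\]
as subspaces of $H^1(G_K,\FFpb(\chi))$. Since the right-hand side is manifestly independent of the choices made in constructing the left-hand side, the left-hand side must be as well. The individual basis elements $c_\alpha$ and the set $J_V^{\mathrm{AH}}(\chi_1,\chi_2)$ do in general depend on $M$ and $\pi$, but the span cut out by the indexing set does not.

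There is no real obstacle here: all of the work was absorbed into the proof of Theorem \ref{thm:L_V=L_V^AH}, which established both the dimension count (via Proposition \ref{prop-dim-comb} and Corollary \ref{cor:L-V-dims}) and the containment $L_V \subseteq L_V^{\mathrm{AH}}$ via the explicit reciprocity law of Theorem \ref{thm:schmidt-rec}. Once that identification is in hand, choice-independence is automatic, and the corollary requires no further argument beyond the one-line deduction above.
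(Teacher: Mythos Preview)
Your proposal is correct and matches the paper's own proof essentially verbatim: the paper also simply observes that $L_V(\chi_1,\chi_2)$ is defined without reference to $M$ or $\pi$, and then invokes Theorem~\ref{thm:L_V=L_V^AH} to conclude that $L_V^{\mathrm{AH}}(\chi_1,\chi_2)$ inherits this independence.
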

\begin{proof}
This follows immediately since $L_V(\chi_1,\chi_2)$ is independent of the choices and we have just proved that $L_V^{\mathrm{AH}}(\chi_1,\chi_2)=L_V(\chi_1,\chi_2)$ for any suitable set of choices.
\end{proof}

\printbibliography
 
\end{document}